\DeclareMathOperator{\Hom}{Hom}
\DeclareMathOperator{\Gal}{Gal}
\DeclareMathOperator{\rank}{rank}
\DeclareMathOperator{\Sel}{Sel}
\DeclareMathOperator{\coker}{coker}
\DeclareMathOperator{\cyc}{cyc}
\newtheorem{theorem}{Theorem}[section]
\newtheorem*{theorem*}{Theorem}
\newtheorem{lemma}[theorem]{Lemma}
\newtheorem{proposition}[theorem]{Proposition}
\newtheorem{corollary}[theorem]{Corollary}
\newtheorem{defn}[theorem]{Definition}
\numberwithin{equation}{section}
\newtheorem{lthm}{Theorem} 
\theoremstyle{remark}
\newtheorem{remark}[theorem]{Remark}
\newcommand\EatDot[1]{}
\newcommand{\Ep}{E[p^\infty]}
\newcommand{\?}{\stackrel{?}{=}}
\newcommand{\Qcycp}{\QQ_{\cyc,p}}
\newcommand{\cM}{{\mathcal{M}}}
\newcommand{\Image}{\mathrm{Image}}
\newcommand{\loc}{\mathrm{loc}}
\newcommand{\QQ}{\mathbb{Q}}
\newcommand{\ZZ}{\mathbb{Z}}
\newcommand{\Qp}{\mathbb{Q}_p}
\newcommand{\Zp}{\mathbb{Z}_p}
\definecolor{Green}{rgb}{0.0, 0.5, 0.0}
\newcommand{\Qcyc}{\QQ_{\cyc}}
\newcommand{\Char}{\mathrm{Char}}
\newcommand{\MW}{\mathrm{MW}}
  \DeclareFontFamily{U}{wncy}{}
  \DeclareFontShape{U}{wncy}{m}{n}{<->wncyr10}{}
  \DeclareSymbolFont{mcy}{U}{wncy}{m}{n}
  \DeclareMathSymbol{\sha}{\mathord}{mcy}{"58}
  \DeclareMathSymbol{\zhe}{\mathord}{mcy}{"11}
\title[Fine/plus/minus Mordell--Weil groups]{Algebraic structure and characteristic ideals of fine Mordell--Weil groups and plus/minus Mordell--Weil groups}
\let\@wraptoccontribs\wraptoccontribs
\author[A.~Lei]{Antonio Lei}
\address{Antonio Lei\newline Department of Mathematics and Statistics\\University of Ottawa\\
150 Louis-Pasteur Pvt\\
Ottawa, ON\\
Canada K1N 6N5}
\email{antonio.lei@uottawa.ca}
\subjclass[2020]{11R23 (primary); 11F11, 11R18 (secondary)}
\keywords{Iwasawa theory, fine Selmer groups, fine Mordell--Weil groups, plus and minus Mordell--Weil groups, control theorems, characteristic ideals.}
\begin{document}
\begin{abstract}
Given an elliptic curve defined over a number field $F$, we study the algebraic structure and prove a control theorem for Wuthrich's fine Mordell--Weil groups over a $\mathbb{Z}_p$-extension of $F$, generalizing results of Lee on the usual Mordell--Weil groups. In the case where $F=\mathbb{Q}$, we show that the characteristic ideal of the Pontryagin dual of the fine Mordell--Weil group over the cyclotomic $\mathbb{Z}_p$-extension coincides with Greenberg's prediction for the characteristic ideal of the dual fine Selmer group. If furthermore $E$ has good supersingular reduction at $p$ with $a_p(E)=0$, we generalize Wuthrich's fine Mordell--Weil groups to define "plus and minus Mordell--Weil groups". We show that the greatest common divisor of the characteristic ideals of the Pontryagin duals of these groups coincides with Kurihara--Pollack's prediction for the greatest common divisor of the plus and minus $p$-adic $L$-functions.
\end{abstract}

\maketitle

\section{Introduction}
\label{S: Intro}
Throughout this article, $p$ is a fixed odd prime number. Let $F$ be a number field and $F_\infty/F$  a $\Zp$-extension with Galois group $\Gamma$. We write $\Lambda$ for the Iwasawa algebra $\Zp[[\Gamma]]$, which we identify with the ring of power series $\Zp[[X]]$ via $X=\gamma-1$, where $\gamma$ is a topological generator of $\Gamma$. Given an integer $n\ge1$, we write $\Phi_n=\displaystyle\frac{(X+1)^{p^n}-1}{(X+1)^{p^{n-1}}-1}\in \Lambda$ for the $p^n$-th cyclotomic polynomial in $X+1$. For $n=0$, we define $\Phi_0=X$.

Let $E/F$ be an elliptic curve. In \cite{lee20}, Lee studied the structure of the $\Lambda$-module $E(F_\infty)\otimes_\ZZ\Qp/\Zp$ and showed that there is a pseudo-isomorphism of $\Lambda$-modules
\begin{equation}
\label{eq:pseudo-Lee}
    \left(E(F_\infty)\otimes_\ZZ\Qp/\Zp\right)^\vee\sim\Lambda^r\oplus\bigoplus_{i=1}^t\Lambda/\Phi_{b_i}
\end{equation}
for certain non-negative integers $r,t$ and $b_i$, and $M^\vee$ denotes the Pontryagin dual of $M$ (see Theorem A in op. cit.). Let $F_n$ denote the unique sub-extension of $F_\infty/F$ such that $[F_n:F]=p^n$. Consider the natural map
\[
\MW_n:E(F_n)\otimes_\ZZ\Qp/\Zp\rightarrow \left(E(F_\infty)\otimes_\ZZ\Qp/\Zp\right)^{\Gamma_n},
\]
where $\Gamma_n=\Gal(F_\infty/F_n)$.
Lee  showed that $\ker(\MW_n)$ is finite, of order bounded independently of $n$ (Lemma~2.0.1 of op. cit.) and that if $r=0$, then $\MW_n$ is surjective for almost all $n$ (Theorem~2.1.5 of op. cit.). This in turn tells us that the Mordell--Weil ranks of $E(F_n)$ stabilizes as $n\rightarrow \infty$. If furthermore the $p$-primary Selmer groups $\Sel_{p^\infty}(E/F_n)$ satisfy a control theorem (as proved by Mazur \cite{mazur72} in the case where $E$ has good ordinary reduction at all the primes above $p$), the factors $\Lambda/\Phi_{b_i}$ can be described by the growth of Mordell--Weil ranks of $E(F_n)$ as $n$ grows (see \cite[Remark~2.1.6(2)]{lee20}).

The starting point of the present article is a generalization of Lee's results to the setting of fine Mordell--Weil groups introduced by Wuthrich in \cite{wut-camb}. These groups are important in the study of the fine Selmer group $\Sel_0(E/F_\infty)$ of $E$ over $F_\infty$, as defined by
Coates and Sujatha \cite{CoatesSujatha_fineSelmer}. It is conjectured that when $F_\infty/F$ is the cyclotomic $\Zp$-extension, the Pontryagin dual $\Sel_0(E/F_\infty)^\vee$ is a finitely generated $\Zp$-module (labeled Conjecture A in op. cit.). One advantage of studying the Iwasawa theory of the fine Selmer group over the classical $p$-primary Selmer group $\Sel_{p^\infty}(E/F_\infty)$ is that it behaves more closely to the $p$-primary part of class groups studied in classical Iwasawa Theory. Furthermore, many Iwasawa-theoretic results on fine Selmer groups can be stated in a uniform way, independent of the reduction type of $E$ at $p$.

Let $\cM(E/K)$ denote Wuthrich's  fine Mordell--Weil group of $E$ over an extension $K$ of $F$  (see Definition~\ref{defn:fine}). Similar to the classical $p$-primary Selmer group, which sits inside the short exact sequence
\[
0\rightarrow E(K)\otimes\Qp/\Zp\rightarrow \Sel_{p^\infty}(E/K)\rightarrow \sha(E/K)[p^\infty]\rightarrow0,
\]
the fine Selmer group sits inside the short exact sequence
\begin{equation}\label{eq:fineSel}
   0\rightarrow \cM(E/K)\rightarrow \Sel_0(E/K)\rightarrow \zhe(E/K)\rightarrow0, 
\end{equation}
where $\zhe(E/K)$ is isomorphic to a subgroup of $\sha(E/K)[p^\infty]$ and is referred to as the fine Tate--Shafarevich group by Wuthrich. One interesting application is that this allows us to study Coates--Sujatha's Conjecture A by analyzing the growth of $|\zhe(E/F_n)|$, which is conjecturally finite, as $n\rightarrow \infty$ (see \cite[Conjecture~8.2]{wut-camb}).

In this paper, we study two generalizations of Lee's results in \cite{lee20} to Wuthrich's fine Mordell--Weil groups.
Our first result studies the algebraic structure of fine Mordell--Weil groups over $F_\infty$:
\begin{lthm}[Theorem~\ref{thm:structure}]\label{thmA}
If the integer $r$ in \eqref{eq:pseudo-Lee} is zero, then there is a pseudo-isomorphism of $\Lambda$-modules
\[
\cM(E/F_\infty)^\vee\sim \bigoplus_{i=1}^u \Lambda/\Phi_{c_i}
\]
for certain non-negative integers $u$ and $c_i$. 
\end{lthm}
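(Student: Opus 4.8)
The plan is to realize $\cM(E/F_\infty)^\vee$ as a quotient of the module $N:=\bigl(E(F_\infty)\otimes_\ZZ\Qp/\Zp\bigr)^\vee$ studied by Lee, and then to read off its structure from the pseudo-isomorphism \eqref{eq:pseudo-Lee}. First I would establish that the fine Mordell--Weil group is a $\Lambda$-submodule of $E(F_\infty)\otimes_\ZZ\Qp/\Zp$. Comparing the defining sequence \eqref{eq:fineSel} with the classical one: since the fine Selmer group sits inside $\Sel_{p^\infty}(E/F_\infty)$ (being cut out by stricter local conditions at the primes above $p$) and $\zhe(E/F_\infty)$ is a subgroup of $\sha(E/F_\infty)[p^\infty]$ compatibly with the boundary maps, the snake lemma applied to the induced map of short exact sequences has injective middle and right vertical maps, forcing the left vertical map $\cM(E/F_\infty)\hookrightarrow E(F_\infty)\otimes_\ZZ\Qp/\Zp$ to be injective. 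Concretely, $\cM(E/F_\infty)$ is the kernel of the localisation map $E(F_\infty)\otimes_\ZZ\Qp/\Zp\to\bigoplus_{v\mid p}E(F_{\infty,v})\otimes_\ZZ\Qp/\Zp$. Dualising this inclusion gives a surjection $N\twoheadrightarrow\cM(E/F_\infty)^\vee$, so that $\cM(E/F_\infty)^\vee$, being a quotient of $N$, is a finitely generated torsion $\Lambda$-module once $N$ is.

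Next I would invoke \eqref{eq:pseudo-Lee} with $r=0$: this says $N\sim\bigoplus_{i=1}^t\Lambda/\Phi_{b_i}$, so $N$ is $\Lambda$-torsion and hence so is its quotient $\cM(E/F_\infty)^\vee$. The decisive observation is that the elementary module $\bigoplus_{i=1}^t\Lambda/\Phi_{b_i}$ is annihilated by the \emph{squarefree} element $f:=\prod_{b}\Phi_{b}$, the product being taken over the distinct values of the $b_i$ (distinct cyclotomic polynomials $\Phi_b$ are coprime irreducibles in $\Lambda$). Choosing a pseudo-isomorphism $\phi\colon N\to\bigoplus_i\Lambda/\Phi_{b_i}$ with finite kernel, we get $f\cdot N\subseteq\ker\phi$, so $f\cdot N$ is finite; pushing this through the surjection $N\twoheadrightarrow\cM(E/F_\infty)^\vee$ shows that $f\cdot\cM(E/F_\infty)^\vee$ is finite as well.

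It then remains to prove the following purely module-theoretic lemma: \emph{if $Q$ is a finitely generated torsion $\Lambda$-module such that $f\cdot Q$ is finite for some squarefree product $f=\prod\Phi_{a}$ of distinct cyclotomic polynomials, then $Q\sim\bigoplus_i\Lambda/\Phi_{c_i}$.} Writing $Q\sim\bigoplus_j\Lambda/(g_j^{e_j})$ via the structure theorem (the $g_j$ irreducible distinguished polynomials), the finiteness of $f\cdot Q$ forces finiteness of $f\cdot\bigl(\bigoplus_j\Lambda/(g_j^{e_j})\bigr)$; a direct computation of $f\cdot\Lambda/(g^e)$ shows this group is infinite unless $g\mid f$ and $e=1$. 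Hence every $g_j$ is one of the $\Phi_a$ and every $e_j=1$, giving the claimed form. Applying this to $Q=\cM(E/F_\infty)^\vee$ completes the proof.

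The main obstacle is the last lemma, or rather the conceptual point it isolates. Realising $\cM(E/F_\infty)^\vee$ as a quotient of $N$ only bounds its characteristic ideal by a product of cyclotomic polynomials, which a priori permits elementary divisors such as $\Phi_n^2$, and hence summands $\Lambda/\Phi_n^2$ not of the required shape. Ruling these out is exactly what the squarefreeness of the annihilator $f$ (as opposed to that of the characteristic ideal) accomplishes, and tracking this annihilator faithfully through a pseudo-isomorphism---which need not preserve annihilators on the nose---is the delicate part of the argument.
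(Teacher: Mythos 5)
Your proposal is correct and follows essentially the same route as the paper: dualizing the tautological inclusion $\cM(E/F_\infty)\subset E(F_\infty)\otimes\Qp/\Zp$ realizes $\cM(E/F_\infty)^\vee$ as a quotient of Lee's module, and the fact that the elementary divisors $\Phi_{b_i}$ occur with exponent one forces any quotient to be pseudo-isomorphic to a partial direct sum of $\bigoplus_i\Lambda/\Phi_{b_i}$ --- your squarefree-annihilator lemma is exactly the justification of the step the paper compresses into the phrase ``simple cyclic modules''. The only (harmless) imprecision is that your statement of the structure theorem for $Q$ omits possible $\Lambda/(p^k)$ summands; these are excluded by the same computation, since $p\nmid f$ and hence $f\cdot\Lambda/(p^k)$ is infinite.
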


Our second result  is the following control theorem for the fine Mordell--Weil groups:

\begin{lthm}[Theorem~\ref{thm:control}]\label{thmB}
Let $m_n$ denote the natural morphism
\[
m_n:\cM(E/F_n)\rightarrow \cM(E/F_\infty)^{\Gamma_n}
\]
induced by the inclusion $E(F_n)\rightarrow E(F_\infty)$.
\begin{itemize}
    \item[(a)]  The kernel of $m_n$ is finite of order bounded independently of $n$. 
    \item[(b)]Suppose that $\zhe(E/F_n)$ is finite, then the  cokernel of $m_n$ is finite. 
  \end{itemize}
\end{lthm}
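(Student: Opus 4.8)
\smallskip
The plan is to deduce the statement from a control theorem for the fine Selmer group $\Sel_0$, using the short exact sequence \eqref{eq:fineSel}, which is functorial in the base field. Write
\[
s_n\colon \Sel_0(E/F_n)\longrightarrow \Sel_0(E/F_\infty)^{\Gamma_n},\qquad z_n\colon \zhe(E/F_n)\longrightarrow \zhe(E/F_\infty)^{\Gamma_n}
\]
for the restriction maps, so that $(m_n,s_n,z_n)$ is a morphism from \eqref{eq:fineSel} over $F_n$ to the left-exact sequence obtained by applying $(-)^{\Gamma_n}$ to \eqref{eq:fineSel} over $F_\infty$. The heart of the matter is the following control statement, which I would prove first: \emph{$\ker(s_n)$ is finite and of order bounded independently of $n$, and $\coker(s_n)$ is finite.}

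To prove this, I would set up the usual control diagram comparing $\Sel_0(E/F_n)$ with $\Sel_0(E/F_\infty)^{\Gamma_n}$ inside the inflation--restriction squares for $H^1(-,\Ep)$ and its localizations. Since $\Gamma_n\cong\Zp$ has $\cd_p=1$, the global restriction map $H^1(F_n,\Ep)\to H^1(F_\infty,\Ep)^{\Gamma_n}$ is surjective with kernel $H^1(\Gamma_n,\Ep^{G_{F_\infty}})$, a group that is finite and of bounded order because $\Ep^{G_{F_\infty}}$ is finite. The decisive feature of the \emph{fine} Selmer group is that its local conditions at all places of the defining set $S$ are trivial; consequently the only local contributions to $\ker(s_n)$ and $\coker(s_n)$ are the kernels of the local restriction maps $H^1(F_{n,v},\Ep)\to H^1(F_{\infty,w},\Ep)^{\Gamma_{n,w}}$, each equal to $H^1(\Gamma_{n,w},\Ep^{G_{F_{\infty,w}}})$ and hence finite and bounded (the contribution of split primes vanishing). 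In particular no assumption on the reduction type of $E$ at $p$ enters. A snake-lemma chase across the control diagram then gives finiteness and boundedness of $\ker(s_n)$ and finiteness of $\coker(s_n)$.

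Granting this, I would finish by applying the snake lemma to $(m_n,s_n,z_n)$. Replacing the target of the invariant sequence by the image of $\Sel_0(E/F_\infty)^{\Gamma_n}\to\zhe(E/F_\infty)^{\Gamma_n}$ turns the bottom row into a genuine short exact sequence, and one checks that $z_n$ lands in this image, yielding an exact sequence
\[
0\to\ker(m_n)\to\ker(s_n)\to\ker(z_n)\to\coker(m_n)\to\coker(s_n).
\]
Part (a) is then immediate, since $\ker(m_n)\hookrightarrow\ker(s_n)$ is finite and bounded independently of $n$. For part (b), the same sequence exhibits $\coker(m_n)$ as an extension of a subgroup of $\coker(s_n)$ by a quotient of $\ker(z_n)$; the former is finite by the control statement, and the latter is contained in $\zhe(E/F_n)$, which is finite by hypothesis, so $\coker(m_n)$ is finite.

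The main obstacle I anticipate lies entirely in the fine Selmer control statement, and specifically in verifying that the global and local invariants $\Ep^{G_{F_\infty}}$ and $\Ep^{G_{F_{\infty,w}}}$ are finite for the $\Zp$-extension at hand, so that the associated groups $H^1(\Gamma_n,-)$ and $H^1(\Gamma_{n,w},-)$ are finite and of bounded order. Once these finiteness inputs are in place the argument is uniform in the reduction type, and the remaining steps are the two formal snake-lemma chases described above.
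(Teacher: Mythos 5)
Your argument is correct and is essentially the proof in the paper: the same commutative diagram built from \eqref{eq:fineSel} over $F_n$ and the $\Gamma_n$-invariants of \eqref{eq:fineSel} over $F_\infty$, followed by the same snake-lemma chase giving $0\to\ker(m_n)\to\ker(s_n)\to\ker(z_n)\to\coker(m_n)\to\coker(s_n)$. The only difference is that the paper simply quotes Lim's control theorem for fine Selmer groups (\cite[Theorem~1.1]{Lim-control}) for the finiteness and boundedness of $\ker(s_n)$ and the finiteness of $\coker(s_n)$, rather than re-deriving it via inflation--restriction as you sketch.
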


Note that unlike the case of Mordell--Weil groups studied in \cite{lee20}, our control theorem is independent of the reduction type of $E$. Our proof relies on the corresponding control theorem for the fine Selmer groups  proved by Lim \cite{Lim-control}.

Armed with these two general results, we specialize to the case where $F=\QQ$ and $F_\infty=\Qcyc$ is the cyclotomic $\Zp$-extension of $\QQ$.  In particular, we study  the characteristic ideal of the Pontryagin dual of the fine Mordell--Weil group over $\Qcyc$, which is equivalent to studying the direct summands appearing on the right-hand side of the pseudo-isomorphism in Theorem~\ref{thmA}. For a positive integer $n$, we write $e_n=\displaystyle\frac{\rank E(F_n)-E(F_{n-1})}{\phi(p^n)}$, where $\phi$ is the Euler totient function. We also define $e_0=\rank E(\QQ)$.
 Greenberg posed the following problem
 \[
        \Char_\Lambda\Sel_0(E/\Qcyc)^\vee\?\left(\prod_{e_n>0}\Phi_n^{e_n-1}\right)
 \]
  (see \cite[Problem~0.7]{KP}). 
The following theorem tells us that the right-hand side can in fact be reinterpreted as the characteristic ideal of  $\cM(E/\Qcyc)^\vee$.
\begin{lthm}[Theorem~\ref{thm:char-fineMW}]\label{thmC}
Suppose that $\sha(E/F_n)[p^\infty]$ is finite for all $n\ge0$. Then,
\[
\Char_\Lambda \cM(E/\Qcyc)^\vee=\left(\prod_{e_n>0}\Phi_n^{e_n-1}\right).
\]
\end{lthm}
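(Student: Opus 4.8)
The plan is to read off the multiplicity of each $\Phi_n$ in $\Char_\Lambda\cM(E/\Qcyc)^\vee$ by comparing $\Zp$-coranks at every finite level. First I would check that we are in the situation of Theorem~\ref{thmA}: by Kato's theorem $\Sel_{p^\infty}(E/\Qcyc)^\vee$ is $\Lambda$-torsion, hence so is its quotient $\bigl(E(\Qcyc)\otimes\Qp/\Zp\bigr)^\vee$, so $r=0$ in \eqref{eq:pseudo-Lee}. Theorem~\ref{thmA} then gives $\cM(E/\Qcyc)^\vee\sim\bigoplus_i\Lambda/\Phi_{c_i}$, and writing $d_n=\#\{i:c_i=n\}$ we have $\Char_\Lambda\cM(E/\Qcyc)^\vee=\prod_n\Phi_n^{d_n}$. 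Using $(X+1)^{p^n}-1=\prod_{j=0}^n\Phi_j$ and the pairwise coprimality of the $\Phi_j$, a direct computation on $\bigoplus_i\Lambda/\Phi_{c_i}$ shows that its $\Gamma_n$-coinvariants have free $\Zp$-rank $\sum_{j=0}^n d_j\,\phi(p^j)$. Dualizing and feeding this into the control theorem (Theorem~\ref{thmB}, whose cokernel hypothesis holds because $\zhe(E/F_n)$, being a subgroup of the finite group $\sha(E/F_n)[p^\infty]$, is finite) yields
\[
\corank_{\Zp}\cM(E/F_n)=\sum_{j=0}^n d_j\,\phi(p^j).
\]

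Next I would compute the same corank arithmetically. Lee's control theorem \cite{lee20} together with the finiteness of $\sha(E/F_n)[p^\infty]$ gives $\rank E(F_n)=\corank_{\Zp}\bigl(E(F_n)\otimes\Qp/\Zp\bigr)=\sum_{j=0}^n e_j\,\phi(p^j)$. Since $\cM(E/K)$ is cut out inside $E(K)\otimes\Qp/\Zp$ by the vanishing of the localizations at the primes above $p$, there is over $\Qcyc$ a short exact sequence
\[
0\to\cM(E/\Qcyc)\to E(\Qcyc)\otimes\Qp/\Zp\xrightarrow{\ \mathrm{loc}_p\ }C\to0,\qquad C=\Image(\mathrm{loc}_p),
\]
and $\cM(E/F_n)=\ker(\mathrm{loc}_p)$ up to finite groups at each finite level.

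To study $C$ I would descend to finite levels. Since $p$ is totally ramified in $\Qcyc/\QQ$, the local term $E(F_{n,v})\otimes\QQ_p$ is the regular representation $\QQ_p[\Gal(F_n/\QQ)]$, in which the $\QQ_p$-irreducible representation $W_j$ attached to $\Phi_j$ (of dimension $\phi(p^j)$) occurs exactly once. Decomposing the $\Gal(F_n/\QQ)$-equivariant map $\mathrm{loc}_p$ on $E(F_n)\otimes\QQ_p\cong\bigoplus_{j=0}^n W_j^{\oplus e_j}$ into isotypic components, its image meets each $W_j$ with multiplicity $0$ or $1$; writing $s_j\in\{0,1\}$ for this multiplicity (independent of $n$ for $n\ge j$), the level-$n$ image has corank $\sum_{j\le n}s_j\phi(p^j)$, and $s_j=0$ whenever $e_j=0$. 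Hence $\corank_{\Zp}\cM(E/F_n)=\rank E(F_n)-\sum_{j\le n}s_j\phi(p^j)=\sum_{j\le n}(e_j-s_j)\phi(p^j)$. Comparing with the formula of the first paragraph and differencing in $n$ yields the exact relation $d_n=e_n-s_n$, so the theorem is equivalent to the assertion that $s_n=1$ whenever $e_n>0$; that is, that $\mathrm{loc}_p$ is nonzero on the $\Phi_n$-isotypic component of $E(F_n)\otimes\QQ_p$.

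This non-degeneracy of localization is the heart of the argument and the step I expect to be hardest. For $n=0$ it is elementary, since a point of infinite order in $E(\QQ)$ remains of infinite order in $E(\Qp)$ because $E(\QQ)\hookrightarrow E(\Qp)$, forcing $s_0=1$ when $e_0>0$. The difficulty for $n\ge1$ is that after tensoring with $\QQ_p$ an entire isotypic block $W_n^{\oplus e_n}$ could a priori collapse into the kernel—a statement about the non-triviality of the relevant $p$-adic regulator, which resists a direct pointwise attack. Rather than argue pointwise, I plan to pin down $\corank_{\Zp}C$ through the finite-level Cassels--Poitou--Tate sequence: the self-duality of $\Ep$ under the Weil pairing should identify the cokernel of the global localization map at $p$ with the Pontryagin dual of a fine Selmer group, and the finiteness of $\sha(E/F_n)[p^\infty]$ should then force $\sum_{j\le n}s_j\phi(p^j)=\sum_{j\le n,\,e_j>0}\phi(p^j)$. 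Granting this, $s_n=\mathbbm{1}[e_n>0]$, so $d_n=e_n-1$ when $e_n>0$ and $d_n=0$ otherwise, which is exactly the claimed equality $\Char_\Lambda\cM(E/\Qcyc)^\vee=\prod_{e_n>0}\Phi_n^{e_n-1}$.
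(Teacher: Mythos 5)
Your framework is sound and is essentially the paper's: both arguments identify the local module $E(F_{n,v})\otimes\Qp$ with the regular representation $\Qp[\Gal(F_n/\QQ)]$ via the formal logarithm, so that each irreducible $W_j$ occurs in it with multiplicity one, and both reduce the theorem to the claim that $\mathrm{loc}_p$ is nonzero on the $\Phi_n$-isotypic block of $E(F_n)\otimes\Qp$ whenever $e_n>0$. The problem is that you stop exactly at this claim for $n\ge 1$ (``Granting this\dots''), and the route you propose to establish it --- Cassels--Poitou--Tate plus self-duality plus finiteness of $\sha(E/F_n)[p^\infty]$ --- is neither carried out nor clearly workable: the cokernel of the localization map in that sequence is controlled by a compact fine Selmer group, whose size is essentially what you are trying to compute, so the argument risks circularity, and finiteness of $\sha$ by itself gives no lower bound on the corank of the local image. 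As written, the proof therefore has a genuine gap at its acknowledged crux.

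The gap is filled by an elementary argument (this is the paper's Lemma~\ref{lem:surj-Phi_n}), and the ``pointwise attack'' you set aside in fact succeeds precisely because of the multiplicity-one structure you have already established. Since $W_n$ occurs exactly once in $E(F_{n,v})\otimes\Qp$, the image of the block $W_n^{\oplus e_n}$ under the ($\Lambda$-equivariant) map $\mathrm{loc}_p$ is either $0$ or the full copy of $W_n$, so it suffices to rule out total collapse. If the whole block died, then any $P\in E(F_n)$ of infinite order modulo $E(F_{n-1})$ (such $P$ exists exactly when $e_n>0$) would have local logarithm lying in $F_{n-1,v}$, whence $p^cP\in E(F_{n-1,v})$ for some $c\ge0$; but $p$ is totally ramified in $F_n/F_{n-1}$, so $F_n\cap F_{n-1,v}=F_{n-1}$ and hence $E(F_n)\cap E(F_{n-1,v})=E(F_{n-1})$, giving $p^cP\in E(F_{n-1})$ and contradicting the choice of $P$. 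No regulator non-vanishing and no duality are needed. With this lemma in place, your count $d_n=e_n-s_n$ with $s_n=\min(1,e_n)$ goes through and recovers the paper's Corollary~\ref{cor:sn-en}.
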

In particular, thanks to \eqref{eq:fineSel}, this tells us that under the hypothesis of Theorem~\ref{thmC}, the answer to Greenberg's problem is affirmative if and only if $\zhe(E/\Qcyc)$ is finite. Recall that $\zhe(E/\Qcyc)$ is isomorphic to a subgroup of $\sha(E/\Qcyc)[p^\infty]$ and its definition depends on the chosen prime $p$. It is predicted by Wuthrich that $\zhe(E/\Qcyc)$ should be finite for all but finitely many $p$ (see \cite[Conjecture~8.4]{wut-camb}).

In the case where $E/\QQ$ has good supersingular reduction at $p$  with $a_p(E)=0$, Kurihara and Pollack have posed the following problem:
\[
\gcd\left(L_p^+,L_p^-\right)\?\left(X^{e_0}\prod_{n>0,e_n>0}\Phi_n^{e_n-1}\right),
\]
where $L_p^\pm$ are Pollack's $p$-adic $L$-functions defined in \cite{pollack03} (see \cite[Problem~3.2]{KP}). Under certain hypotheses, relations between Greenberg's problem and Kurihara--Pollack's problem have been studied in \cite[\S3]{KP}. Under different hypotheses, further links between  $\gcd\left(L_p^+,L_p^-\right)$ and the characteristic ideal of $ \Sel_0(E/\Qcyc)^\vee$ have recently been studied in \cite{LeiSujatha2}.

Theorem~\ref{thmC} has led us to study an analogue of Kurihara--Pollack's problem in terms of subgroups of $E(\Qcyc)\otimes\Qp/\Zp$. In particular, we define what we call "plus and minus Mordell--Weil groups", denoted by $\cM^\pm(E/K)$, for sub-extensions $K$ of $\Qcyc$, utilizing the local plus and minus subgroups of Kobayashi studied in  \cite{kobayashi03} (see Definition~\ref{def:pm}). We will show that analogues of Theorems~\ref{thmA} and \ref{thmB} hold for these new groups (see Theorem~\ref{thm:structure-pm}).

Similar to the  parallel between Theorem~\ref{thmC} and Greenberg's problem, we prove that the description of $\gcd\left(L_p^+,L_p^-\right)$ predicted by Kurihara--Pollack's problem is in fact equal to the greatest common divisor of the characteristic ideals of the Pontryagin duals of these new plus and minus Mordell--Weil groups:

\begin{lthm}[Theorem~\ref{thm:Char-pm}]\label{thmD}
Suppose that $E/\QQ$ has supersingular reduction at $p$ with $a_p(E)=0$ and that $\sha(E/F_n)[p^\infty]$ is finite for all $n\ge0$. Then,
\[
\gcd\left(\Char_\Lambda\cM^+(E/\Qcyc)^\vee,\Char_\Lambda\cM^-(E/\Qcyc)^\vee\right)=\left(X^{e_0}\prod_{n>0,e_n>0}\Phi_n^{e_n-1}\right).
\]
\end{lthm}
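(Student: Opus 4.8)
The plan is to run the proof of Theorem~\ref{thmC} twice, once for each sign, and then compare multiplicities. By the plus/minus analogues of Theorems~\ref{thmA} and~\ref{thmB} recorded in Theorem~\ref{thm:structure-pm}, each of $\cM^{+}(E/\Qcyc)^\vee$ and $\cM^{-}(E/\Qcyc)^\vee$ is pseudo-isomorphic to a finite sum $\bigoplus_i\Lambda/\Phi_{c_i^{\pm}}$, so its characteristic ideal is $\prod_n\Phi_n^{m_n^{\pm}}$ with $m_n^{\pm}:=\#\{i:c_i^{\pm}=n\}$. Since distinct $\Phi_n$ generate distinct height-one primes of $\Lambda$, the greatest common divisor is $\prod_n\Phi_n^{\min(m_n^+,m_n^-)}$, and the theorem amounts to the identity
\[
\min(m_n^+,m_n^-)=\begin{cases} e_0, & n=0,\\[2pt] e_n-1, & n>0,\ e_n>0,\\[2pt] 0, & n>0,\ e_n=0.\end{cases}
\]

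First I would pin down robust two-sided bounds on each $m_n^{\pm}$. By construction (Definition~\ref{def:pm}) there are inclusions of $\Lambda$-modules $\cM(E/\Qcyc)\subseteq\cM^{\pm}(E/\Qcyc)\subseteq E(\Qcyc)\otimes_\ZZ\Qp/\Zp$, where $\cM$ is Wuthrich's fine Mordell--Weil group. Dualizing yields surjections $\left(E(\Qcyc)\otimes\Qp/\Zp\right)^\vee\twoheadrightarrow\cM^{\pm}(E/\Qcyc)^\vee\twoheadrightarrow\cM(E/\Qcyc)^\vee$, so the characteristic ideal of $\cM^{\pm}(E/\Qcyc)^\vee$ is divisible by that of $\cM(E/\Qcyc)^\vee$ and divides that of $\left(E(\Qcyc)\otimes\Qp/\Zp\right)^\vee$. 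Reading off the $\Phi_n$-multiplicities ($e_n$ for the full Mordell--Weil group by \eqref{eq:pseudo-Lee}, using Lee's control result in \cite{lee20}, which is valid irrespective of the reduction type, and $(e_n-1)_+$ for the fine group by Theorem~\ref{thmC}) I obtain $(e_n-1)_+\le m_n^{\pm}\le e_n$. This already disposes of the case $e_n=0$, where both bounds force $m_n^{\pm}=0$, and of the base layer: at $n=0$ the local plus/minus conditions are vacuous, $E^{\pm}(\Qp)=E(\Qp)$, so $\cM^{\pm}(E/\QQ)=E(\QQ)\otimes\Qp/\Zp$ and $m_0^+=m_0^-=e_0$. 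This base discrepancy (full rank rather than rank minus one) is exactly what produces the factor $X^{e_0}$ in place of the $X^{e_0-1}$ appearing in Theorem~\ref{thmC}.

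For $n\ge1$ with $e_n>0$ it remains to show that $m_n^+$ and $m_n^-$ are not both equal to $e_n$; together with the lower bound this gives $\{m_n^+,m_n^-\}=\{e_n-1,e_n\}$ and hence $\min=e_n-1$. Here I would use the plus/minus control theorem together with the finiteness of $\sha(E/F_n)[p^\infty]$ to translate multiplicities into corank growth: writing $\lambda_n$ for the localization map $E(F_n)\otimes\Qp/\Zp\to E(F_{n,\p})\otimes\Qp/\Zp$ at the unique prime $\p$ above $p$ and $L_n^{\pm}=E^{\pm}(F_{n,\p})\otimes\Qp/\Zp$, one has $\cM^{\pm}(E/F_n)=\lambda_n^{-1}(L_n^{\pm})$ and therefore
\[
m_n^{\pm}\,\phi(p^n)=(e_n-1)\,\phi(p^n)+\Big[\corank_{\Zp}\big(\Image\lambda_n\cap L_n^{\pm}\big)-\corank_{\Zp}\big(\Image\lambda_{n-1}\cap L_{n-1}^{\pm}\big)\Big].
\]
Kobayashi's computation of the plus/minus norm groups \cite{kobayashi03} shows that the coranks of $L_n^+$ and $L_n^-$ grow alternately with the parity of $n$ (one gains $\phi(p^n)$ at layer $n$ and the other none), that the newly appearing local layer lies in the growing subspace, and that $L_n^+\cap L_n^-$ has corank one for all $n$. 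Combined with the input extracted from Theorem~\ref{thmC} that, when $e_n>0$, the new part of $\Image\lambda_n$ fills this new local layer, exactly one of the two bracketed correction terms equals $\phi(p^n)$ and the other vanishes, according to the parity of $n$. This yields $m_n^+=e_n,\ m_n^-=e_n-1$ for $n$ even and the reverse for $n$ odd, completing the computation of the greatest common divisor.

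I expect the main obstacle to be this last local--global interaction: one must show not merely that the relevant coranks add up, but that the newly appearing global classes at layer $n$ localize into the new, parity-appropriate piece of Kobayashi's local filtration, so that intersecting with $L_n^{\pm}$ retains them for one sign and discards them for the other. Making this transversality precise requires a careful compatibility between Kobayashi's local plus/minus decomposition and the global localization maps along the tower, and one must check throughout that the finite kernels and cokernels produced by the control theorem and the pseudo-isomorphisms affect only bounded quantities and not the graded coranks that determine the multiplicities $m_n^{\pm}$.
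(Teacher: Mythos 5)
Your global strategy (reduce to computing $\min(m_n^+,m_n^-)$ layer by layer, get the lower bound $\max(0,e_n-1)\le m_n^\pm$ from the inclusion $\cM\subseteq\cM^\pm$, treat $n=0$ separately, then show the minimum equals $e_n-1$ for $n>0$ with $e_n>0$) matches the paper's, and your handling of $n=0$ and of the lower bound agrees with Corollary~\ref{cor:rn-pm}. The decisive step --- ruling out $m_n^+=m_n^-=e_n$ --- is where your argument has a genuine gap, and it is also where the paper proceeds quite differently.

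First, you write $\cM^{\pm}(E/F_n)=\lambda_n^{-1}(L_n^{\pm})$ with $L_n^{\pm}=E^{\pm}(F_{n,p})\otimes\Qp/\Zp$. That is not the definition used here: Definition~\ref{def:pm} imposes the local condition $\cM^\pm(E/\QQ_{n,p})=\cM^\pm(E/\Qcycp)^{\Gal(\Qcycp/\QQ_{n,p})}$, whose Pontryagin dual is $\Zp[X]/\omega_n$, so it has corank $p^n$ and a full $\Phi_n$-isotypic piece for \emph{both} signs; Remark~\ref{rk:pm} explicitly records that $E^\pm(\QQ_{n,p})\otimes\Qp/\Zp$ is a \emph{proper} subgroup of $\cM^\pm(E/\QQ_{n,p})$ for $0<n<\infty$. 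Your parity computation (one of the two local conditions gains the entire new $\Phi_n$-layer at level $n$, the other gains nothing) is correct for Kobayashi's norm subgroups, but the multiplicities $m_n^\pm$ are tied to $\cM^\pm(E/\Qcyc)$ through the control theorem for the paper's finite-level groups, not for $\lambda_n^{-1}(L_n^\pm)$; to transfer your computation you would need to show that $\cM^\pm(E/\QQ_{n,p})\cap\bigl(E(\QQ_{n,p})\otimes\Qp/\Zp\bigr)$ agrees with $E^\pm(\QQ_{n,p})\otimes\Qp/\Zp$ up to finite index, which is precisely the local--global transversality you flag at the end as an unresolved obstacle. Separately, your upper bound $m_n^\pm\le e_n$ should be obtained at finite level from $V_p\cM^\pm(E/\QQ_{(n)})\subseteq E(\QQ_{(n)})^\bullet$ via Remark~\ref{rk:Tp-pm}; identifying the $\Phi_n$-multiplicity of $\bigl(E(\Qcyc)\otimes\Qp/\Zp\bigr)^\vee$ with $e_n$ in the supersingular case is not justified by the results you cite (the paper only makes that identification in the ordinary case).

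The paper sidesteps the transversality question entirely with a sum-and-intersection parity trick, of which there is no trace in your proposal. Lemma~\ref{lem:global-pm}(b), resting on Kobayashi's $E^+\cap E^-=E(\Qp)$, identifies the $\Phi_n$-part of $V_p(\cM^+\cap\cM^-)$ with that of the fine Mordell--Weil group; Lemma~\ref{lem:sum-pm}, via the identity $A\tilde\omega_n^-+B\omega_n^+=p^m$, shows that $\cM^+_{p^k}+\cM^-_{p^k}$ has bounded index in $E(\QQ_{(n)})/p^k$. Feeding these into the exact sequence for the sum and intersection yields $r_n^++r_n^-=e_n+s_n=2e_n-1$ (Corollary~\ref{cor:sum}), an odd integer, so $r_n^+\ne r_n^-$ and the smaller is at most $e_n-1$; combined with the lower bound $s_n\le\min(r_n^+,r_n^-)$ this forces $\min(r_n^+,r_n^-)=e_n-1$ without ever determining which sign is which. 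As written, your argument is incomplete at exactly the point this trick is designed to avoid.
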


Similar to Theorem~\ref{thmC}'s consequence on Greenberg's problem, we shall see that Theorem~\ref{thmD} tells us that the answer to Kurihara--Pollack's problem is affirmative under the hypotheses that certain plus and minus Tate--Shafarevich groups are finite and that Kobayashi's plus and minus main conjecture  holds (see Remark~\ref{rk:KP} at the end of the article).
\section*{Acknowledgements}
The author thanks Katharina M\"uller for interesting dicussions on topics studied in this article.  He is also indebted to the anonymous referee for their careful reading of an earlier version of the article and their extremely helpful comments and suggestions. The author's research is supported by the NSERC Discovery Grants Program RGPIN-2020-04259 and RGPAS-2020-00096.

\section{Notation}
\label{sec:notation}

Throughout the article, $E$ is an elliptic curve defined over a number field $F$. We fix a $\Zp$-extension $F_\infty/F$ and let $\Gamma$, $\Gamma_n$, $\Lambda$ and $\Phi_n$ be defined as in the introduction. We introduce further basic notation that will be used in subsequent sections of the article.

\begin{defn}
Let $M$ be a $\Lambda$-module and $n\ge0$ an integer. 
\begin{itemize}
    \item We write $M^{\Gamma_n}=H^0(\Gamma_n,M)$ and $M_{\Gamma_n}=H_0(\Gamma_n,M)$.
    \item We write $M[\Phi_n]=\{m\in M:\Phi_n\cdot m=0\}$.
\end{itemize}
\end{defn}

\begin{defn}
Let $A$ be an abelian group.
\begin{itemize}
    \item The Tate module of $A$, denoted by $T_pA$, is defined as the projective limit $\varprojlim_k A[p^k]$, where the connecting maps are given by $\times p$.
   
    \item The $p$-adic completion $A^\star$ of $A$ is defined to be $\varprojlim_k A/p^kA$, where the connecting maps are projections.
     \item We define the $\Qp$-linear versions of $T_pA$ and $A^\star$ by
     \begin{align*}
         V_pA=T_pA\otimes_{\Zp} \Qp,\quad A^\bullet =A^\star\otimes_{\Zp} \Qp.
     \end{align*}
\end{itemize}
\end{defn}
\begin{defn}Let $M$ be a  $\Zp$-module. 
\begin{itemize}
    \item We write $M^\vee$ for the Pontryagin dual $\Hom(M,\Qp/\Zp)$.
    \item We write $M^*$ for the linear  dual $\Hom(M,\Zp)$.
    \item We say that $M$ is cofinitely generated over $\Zp$ if $M^\vee$ is finitely generated over $\Zp$.
    \item If $M$ is a $\Lambda$-module, we say that $M$ is cofinitely generated over $\Lambda$ if $M^\vee$ is finitely generated over $\Lambda$.
\end{itemize}
\end{defn}

\begin{remark}\label{rk:observe}
Let $M$ be a $\Lambda$-module that is cofinitely generated over $\Zp$.  There is a pseudo-isomorphism
$$M^\vee\sim \bigoplus_{i=1}^k \Lambda/f_i,$$
where $f_i$ are distinguished polynomials.
This allows us to make the following observations:
\begin{itemize}
    \item[(a)] There is an $\Lambda$-isomorphism $$T_pM\cong  \bigoplus_{i=1}^k \Lambda/f_i^\iota,$$
where $\iota$ is the $\Zp$-linear inversion $\Lambda\rightarrow \Lambda$ sending $\gamma$ to $\gamma^{-1}$.
    \item[(b)] There is a pseudo-isomorphism
    $$M^\vee\sim T_pM^*  $$
    of $\Lambda$-modules.
\end{itemize}

\end{remark}

\section{Structure theorem and Control theorem for fine Mordell--Weil groups}

We first recall the definition of fine Mordell--Weil groups of Wuthrich given in \cite{wut-camb}.
\begin{defn}\label{defn:fine}
Given an integer $k\ge1$ and an extension $K/F$, the $p^k$-fine Mordell--Weil group of $E$ over $K$ is defined by
\[
M_{p^k}(E/K)=\ker\left(E(K)/p^k\rightarrow \bigoplus_{v|p} E(K_v)/p^kE(K_v)\right).
\]
The ($p$-primary) fine Mordell--Weil group of $E$ over $K$ is defined  by
\[
\cM(E/K)=\varinjlim_k M_{p^k}(E/K)=\ker\left(E(K)\otimes\Qp/\Zp\rightarrow \bigoplus_{v|p} E(K_v)\otimes\Qp/\Zp\right).
\]
\end{defn}
Since $p$ is fixed throughout the article, we omit the term "$p$-primary" in our discussion.  We recall from the discussion in \cite[\S2]{wut-camb} that the Kummer map induces an injection $\cM(E/K)\hookrightarrow\Sel_0(E/K)$. Here $\Sel_0(E/K)$ is the fine Selmer group given by
\[\ker\left(H^1(K,\Ep)\rightarrow\prod_v\frac{H^1(K_v,\Ep)}{E(K_v)\otimes\Qp/\Zp}\right),\]
 where the product runs over all places $v$ of $K$. The aforementioned injection allows us to define:

\begin{defn}
    The fine Tate--Shafarevich group of $E$ over $K$, denoted by $\zhe(E/K)$, is defined to be the quotient $\Sel_0(E/K)/\cM(E/K)$.
\end{defn}
It particular, this gives tautologically the short exact sequence \eqref{eq:fineSel}.

We now give the  proof of Theorem~\ref{thmA} stated in the introduction, which is a simple consequence of the pseudo-isomorphism \eqref{eq:pseudo-Lee} proved by Lee.
\begin{theorem}\label{thm:structure}
If $r=0$ in \eqref{eq:pseudo-Lee}, then there is a pseudo-isomorphism of $\Lambda$-modules
\[
\cM(E/F_\infty)^\vee\sim\bigoplus_{i=1}^u\Lambda/\Phi_{c_i}
\]
for certain non-negative integers  $u$ and $c_i$.
\end{theorem}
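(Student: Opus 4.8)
The plan is to realise $\cM(E/F_\infty)^\vee$ as a quotient of $N:=\left(E(F_\infty)\otimes_\ZZ\Qp/\Zp\right)^\vee$ and then to argue that, when $r=0$, the property of being pseudo-isomorphic to a module of the shape $\bigoplus\Lambda/\Phi_{c_i}$ is inherited by quotients. Writing $E_\infty=E(F_\infty)\otimes_\ZZ\Qp/\Zp$, Definition~\ref{defn:fine} presents $\cM(E/F_\infty)$ as the $\Lambda$-submodule $\ker\left(E_\infty\to\bigoplus_{v\mid p}E((F_\infty)_v)\otimes\Qp/\Zp\right)$, so there is a short exact sequence $0\to\cM(E/F_\infty)\to E_\infty\to\Image\to0$ of $\Lambda$-modules. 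Since Pontryagin duality is exact and $\Lambda$-linear, dualising produces a $\Lambda$-linear surjection $N\twoheadrightarrow\cM(E/F_\infty)^\vee$. With $r=0$, the pseudo-isomorphism \eqref{eq:pseudo-Lee} reads $N\sim\bigoplus_{i=1}^t\Lambda/\Phi_{b_i}$; in particular $N$ is finitely generated and torsion over $\Lambda$, and therefore so is its quotient $\cM(E/F_\infty)^\vee$.

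The algebraic heart of the argument is the following claim, which I would isolate as a lemma: any quotient $M'$ of a finitely generated torsion $\Lambda$-module $M$ with $M\sim\bigoplus_i\Lambda/\Phi_{b_i}$ satisfies $M'\sim\bigoplus_j\Lambda/\Phi_{c_j}$. The clean way to see this is to characterise such modules after localising at the height one primes of $\Lambda$. Each $\Phi_n$ is an irreducible distinguished polynomial with $\Phi_n\equiv X^{\phi(p^n)}\bmod p$, so $(\Phi_n)$ is a height one prime different from $(p)$, distinct $\Phi_n$ are pairwise coprime, and $\Lambda_{(\Phi_n)}$ is a discrete valuation ring. A finitely generated torsion $\Lambda$-module is pseudo-isomorphic to some $\bigoplus_j\Lambda/\Phi_{c_j}$ exactly when (i) every height one prime in its support has the form $(\Phi_c)$, and (ii) its localisation at each such prime is annihilated by the maximal ideal, i.e.\ is a vector space over the residue field; condition (ii) is precisely what rules out higher powers $\Lambda/\Phi_c^{e}$ in the structure theorem.

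To finish, I would verify (i) and (ii) for $M'=\cM(E/F_\infty)^\vee$. Because localisation is exact, at every height one prime $\mathfrak p$ the module $\bigl(\cM(E/F_\infty)^\vee\bigr)_\mathfrak p$ is a quotient of $N_\mathfrak p$. Applying the description of the previous paragraph to $N\sim\bigoplus_i\Lambda/\Phi_{b_i}$, one sees that $N_\mathfrak p=0$ unless $\mathfrak p=(\Phi_{b_i})$ for some $i$, in which case $N_\mathfrak p$ is killed by $\mathfrak p$. Both properties descend to the quotient, giving (i) and (ii) for $\cM(E/F_\infty)^\vee$; equivalently, $\Char_\Lambda\cM(E/F_\infty)^\vee$ divides $\prod_i\Phi_{b_i}$ and each relevant localisation is semisimple. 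Feeding this into the structure theorem for finitely generated torsion $\Lambda$-modules yields the desired pseudo-isomorphism $\cM(E/F_\infty)^\vee\sim\bigoplus_{i=1}^u\Lambda/\Phi_{c_i}$.

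The step I expect to be the main obstacle, at least conceptually, is that pseudo-isomorphism does not pass to submodules or quotients in general, so one cannot simply replace $N$ by $\bigoplus_i\Lambda/\Phi_{b_i}$ and form quotients there. The localisation characterisation is exactly what circumvents this: conditions (i) and (ii) are detected prime by prime in codimension one and are manifestly preserved under quotients by the exactness of localisation. It is also here that the hypothesis $r=0$ enters decisively, since a nonzero free summand $\Lambda^r$ in \eqref{eq:pseudo-Lee} would make the localisations $N_\mathfrak p$ non-torsion and destroy condition (ii).
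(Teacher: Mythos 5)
Your proof is correct and follows essentially the same route as the paper: realize $\cM(E/F_\infty)^\vee$ as a quotient of $\left(E(F_\infty)\otimes\Qp/\Zp\right)^\vee$ and observe that, when $r=0$, the property of being pseudo-isomorphic to a direct sum $\bigoplus\Lambda/\Phi_{c_i}$ passes to quotients. The paper dispatches that last point in a single line (the summands $\Lambda/\Phi_{b_i}$ are one-dimensional domains, so up to pseudo-isomorphism their only quotients are themselves and pseudo-null modules); your localisation-at-height-one-primes lemma is a careful justification of exactly that step.
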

\begin{proof}
By definition, $\cM(E/F_\infty)^\vee$ is  a quotient of $\left(E(F_\infty)\otimes \Qp/\Zp\right)^\vee$. Since the direct summands on the right-hand side of \eqref{eq:pseudo-Lee} are all simple cyclic modules when $r=0$, it follows that $\cM(E/F_\infty)^\vee$ is pseudo-isomorphic to a partial direct sum of the right-hand side of \eqref{eq:pseudo-Lee}. 
\end{proof}

\begin{remark}
It is not clear to us whether the proof of \cite[Theorem~2.1.2]{lee20} can be generalized to the setting of fine Mordell--Weil groups without assuming $r=0$. Lee's proof relies on the fact that $E(F_n)\otimes\Qp/\Zp$ is divisible, whereas $\cM(E/F_n)$ is not divisible in general (see \cite[discussion before Theorem~7.1]{wut-camb}).\end{remark}
\begin{remark}
\label{rk:Kato-Roh}
When $E$ is defined over $\QQ$, the extension $F/\QQ$ is an abelian extension,  and $F_\infty$ is the cyclotomic $\Zp$-extension of $F$, the works of Kato \cite{Kato} and Rohrlich \cite{rohrlich88} tell us that $r=0$. In particular,  Theorem~\ref{thm:structure} applies in this setting (see \cite[Theorem~1.1.6]{lee20}).
\end{remark}

We now proceed to prove a control theorem for the fine Mordell--Weil groups (Theorem~\ref{thmB}).

\begin{theorem}\label{thm:control}Let $m_n$ denote the natural morphism
\[
m_n:\cM(E/F_n)\rightarrow \cM(E/F_\infty)^{\Gamma_n}
\]
induced by the inclusion $E(F_n)\rightarrow E(F_\infty)$.
\begin{itemize}
    \item[(a)]  The kernel of $m_n$ is finite, with order bounded independently of $n$. 
    \item[(b)]Suppose that $\zhe(E/F_n)$ is finite, then the  cokernel of $m_n$ is finite.
\end{itemize}
\end{theorem}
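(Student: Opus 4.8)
The plan is to reduce the statement to the corresponding control theorem for fine Selmer groups proved by Lim, using the defining sequence \eqref{eq:fineSel} and the snake lemma. First I would fix $n$ and write down the commutative diagram whose top row is \eqref{eq:fineSel} for the field $F_n$ and whose bottom row is obtained by applying $\Gamma_n$-invariants to \eqref{eq:fineSel} for the field $F_\infty$:
\[
\begin{CD}
0 @>>> \cM(E/F_n) @>>> \Sel_0(E/F_n) @>>> \zhe(E/F_n) @>>> 0\\
@. @VV{m_n}V @VV{s_n}V @VV{z_n}V @.\\
0 @>>> \cM(E/F_\infty)^{\Gamma_n} @>>> \Sel_0(E/F_\infty)^{\Gamma_n} @>>> \zhe(E/F_\infty)^{\Gamma_n}
\end{CD}
\]
Here $s_n$ is the restriction map on fine Selmer groups and $z_n$ is the map it induces on fine Tate--Shafarevich groups, while $m_n$ is the restriction of $s_n$ to $\cM(E/F_n)$, which lands in $\cM(E/F_\infty)^{\Gamma_n}$ by functoriality; this makes the left-hand square commute. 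The top row is exact by the definition of $\zhe$, and the bottom row is left exact because $\Gamma_n$-invariants form a left exact functor. In particular left-exactness gives the identification $\cM(E/F_\infty)^{\Gamma_n}=\ker\bigl(\Sel_0(E/F_\infty)^{\Gamma_n}\to\zhe(E/F_\infty)^{\Gamma_n}\bigr)$ that makes the bottom row meaningful.

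Next I would apply the snake lemma. Since the top row is short exact and the bottom row is left exact, the connecting homomorphism yields an exact sequence
\[
0\to\ker m_n\to\ker s_n\to\ker z_n\to\coker m_n\to\coker s_n,
\]
where $\coker s_n$ denotes $\Sel_0(E/F_\infty)^{\Gamma_n}/\Image(s_n)$. The essential external input is Lim's control theorem \cite{Lim-control} for the fine Selmer group, which asserts that $\ker s_n$ and $\coker s_n$ are finite of order bounded independently of $n$. I would emphasise that, unlike Mazur's control theorem, this holds with no hypothesis on the reduction type of $E$, because the local conditions defining $\Sel_0$ are the full local cohomology groups; this is exactly what allows the present control theorem to be reduction-type independent.

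For part~(a), the injection $\ker m_n\hookrightarrow\ker s_n$ furnished by the displayed sequence shows that $\ker m_n$ is finite with order bounded by $|\ker s_n|$, hence bounded independently of $n$. For part~(b), I would fix $n$ and assume $\zhe(E/F_n)$ finite. Then $\ker z_n$ is a subgroup of $\zhe(E/F_n)$ and is therefore finite, so its image in $\coker m_n$ is finite; the displayed sequence simultaneously embeds the quotient of $\coker m_n$ by that image into $\coker s_n$, which is finite by Lim's theorem. Since $\coker m_n$ is an extension of a finite group by a finite group, it is finite, as required.

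The snake-lemma bookkeeping is entirely routine, so the step that requires genuine care is the appeal to \cite{Lim-control}: one must verify that the fine Selmer control map treated there is precisely the restriction map $s_n\colon\Sel_0(E/F_n)\to\Sel_0(E/F_\infty)^{\Gamma_n}$ appearing above, and that its kernel and cokernel are the finite (uniformly bounded) groups I am using. The only mild subtlety on the homological side is that the bottom row is merely left exact rather than short exact, so I would invoke the version of the snake lemma for a morphism from a short exact sequence to a left exact sequence, rather than quoting the statement for two short exact sequences.
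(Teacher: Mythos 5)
Your proof is correct and takes essentially the same approach as the paper: the same commutative diagram comparing \eqref{eq:fineSel} over $F_n$ with the $\Gamma_n$-invariants of \eqref{eq:fineSel} over $F_\infty$, the snake lemma, and Lim's control theorem for fine Selmer groups as the key external input. The only difference is cosmetic (you spell out the six-term snake sequence explicitly, whereas the paper states the conclusions directly).
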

\begin{proof}
Consider the commutative diagram:
\[
\xymatrix{
0\ar[r]&\cM(E/F_\infty)^{\Gamma_n}\ar[r]&\Sel_0(E/F_\infty)^{\Gamma_n}\ar[r]&\zhe(E/F_\infty)^{\Gamma_n}\\
0\ar[r]&\cM(E/F_n)\ar[u]^{m_n}\ar[r]&\Sel_0(E/F_n)\ar[u]^{\beta_n}\ar[r]&\zhe(E/F_n)\ar[u]^{\gamma_n}\ar[r]&0.
}
\]
By \cite[Theorem~1.1]{Lim-control}, the kernel and cokernel of $\beta_n$ are finite of bounded orders. Therefore,  the kernel of $m_n$ is  finite, with order bounded independently of $n$, which proves part (a) of the theorem. 

Under our hypothesis in (b), $\ker \gamma_n$ is finite. By the snake lemma, the finiteness of $\coker m_n$ follows and part (b) of the theorem is proved.
\end{proof}
\begin{remark}\label{rk:finite}
As discussed in \cite[P.3]{wut-camb}, $\zhe(E/F_n)$ is isomorphic to a subgroup of $\sha(E/F_n)[p^\infty]$. In particular, the finiteness of $\zhe(E/F_n)$ would follow from that of $\sha(E/F_n)$, which is expected to hold unconditionally.
\end{remark}

\begin{corollary}\label{cor:Tate-fine}
Suppose that $r=0$ in \eqref{eq:pseudo-Lee} and that $\zhe(E/F_n)$ is finite. There is a $\Lambda$-isomorphism $$T_p\cM(E/F_n)\cong\bigoplus_{c_i\le n} \Lambda/\Phi_{c_i},$$
where the integers $c_i$ are given as in Theorem~\ref{thm:structure}.
\end{corollary}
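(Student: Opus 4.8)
The plan is to identify the $\Lambda$-module $\cM(E/F_n)^\vee$ up to pseudo-isomorphism and then feed this into the formula of Remark~\ref{rk:observe}(a), which converts such pseudo-isomorphism data for the Pontryagin dual into an honest $\Lambda$-isomorphism for the Tate module. The reason to route the argument through Remark~\ref{rk:observe}(a), rather than applying $T_p$ directly to the control map $m_n$, is that $T_p$ is only left exact and $T_p(m_n)$ need not be surjective even when $\coker(m_n)$ is finite (already multiplication by $p$ on $\Qp/\Zp$ induces multiplication by $p$ on $\Zp$); passing through the normal form of the dual sidesteps this.

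First I would note that $\cM(E/F_n)$ is a subgroup of $E(F_n)\otimes\Qp/\Zp$, hence cofinitely generated over $\Zp$, and that it carries a $\Lambda$-module structure through the action of $\Gal(F_n/F)=\Gamma/\Gamma_n$; moreover $m_n$ is $\Gamma/\Gamma_n$-equivariant. Since we assume $\zhe(E/F_n)$ finite, parts (a) and (b) of Theorem~\ref{thm:control} give that $m_n$ has finite kernel and finite cokernel, so $m_n$ is a pseudo-isomorphism of $\Lambda$-modules $\cM(E/F_n)\sim\cM(E/F_\infty)^{\Gamma_n}$. Taking Pontryagin duals and using that duality interchanges invariants and coinvariants, namely $\bigl(\cM(E/F_\infty)^{\Gamma_n}\bigr)^\vee=\bigl(\cM(E/F_\infty)^\vee\bigr)_{\Gamma_n}$, I obtain a pseudo-isomorphism $\cM(E/F_n)^\vee\sim\bigl(\cM(E/F_\infty)^\vee\bigr)_{\Gamma_n}$.

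Next I would compute the coinvariants. By Theorem~\ref{thm:structure} (which applies since $r=0$), $\cM(E/F_\infty)^\vee\sim\bigoplus_{i=1}^u\Lambda/\Phi_{c_i}$. Writing $\omega_n=(1+X)^{p^n}-1=\prod_{j=0}^n\Phi_j$, the functor $(-)_{\Gamma_n}$ is identified with $(-)\otimes_\Lambda\Lambda/\omega_n$, and for each summand $(\Lambda/\Phi_{c_i})_{\Gamma_n}=\Lambda/(\Phi_{c_i},\omega_n)$, which equals $\Lambda/\Phi_{c_i}$ when $c_i\le n$ (as then $\Phi_{c_i}\mid\omega_n$) and is finite when $c_i>n$ (as $\Phi_{c_i}$ and $\omega_n$ are then coprime distinguished polynomials). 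The one subtlety is that $(-)_{\Gamma_n}$ is merely right exact, so I must check that it carries the pseudo-isomorphism of Theorem~\ref{thm:structure} to a pseudo-isomorphism; this holds because $H_0(\Gamma_n,-)$ and $H_1(\Gamma_n,-)$ of a finite $\Lambda$-module over $\Gamma_n\cong\Zp$ are again finite, so the finite kernel and cokernel of the pseudo-isomorphism contribute only finite error terms. Discarding the finite summands, I conclude $\cM(E/F_n)^\vee\sim\bigoplus_{c_i\le n}\Lambda/\Phi_{c_i}$.

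Finally, since the $\Phi_{c_i}$ are distinguished polynomials, Remark~\ref{rk:observe}(a) applies to $M=\cM(E/F_n)$ and yields a $\Lambda$-isomorphism $T_p\cM(E/F_n)\cong\bigoplus_{c_i\le n}\Lambda/\Phi_{c_i}^\iota$. It then remains to observe that $\Phi_{c_i}^\iota$ generates the same ideal as $\Phi_{c_i}$: since $\Phi_n(X)=\Phi_{p^n}(1+X)$ and the cyclotomic polynomial $\Phi_{p^n}$ is self-reciprocal (the case $\Phi_0=X$ being checked directly), one has $\Phi_{c_i}^\iota=(\text{unit})\cdot\Phi_{c_i}$ in $\Lambda$, whence $\Lambda/\Phi_{c_i}^\iota\cong\Lambda/\Phi_{c_i}$, giving the claimed isomorphism. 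I expect the main obstacle to be the bookkeeping of the finite error terms through duality and coinvariants, together with the conceptual point that the honest isomorphism must be extracted via Remark~\ref{rk:observe}(a) rather than from $T_p(m_n)$ directly.
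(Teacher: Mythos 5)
Your proposal is correct and follows essentially the same route as the paper: use the control theorem to identify $\cM(E/F_n)^\vee$ up to finite modules with $\bigl(\cM(E/F_\infty)^\vee\bigr)_{\Gamma_n}\sim\bigoplus_{c_i\le n}\Lambda/\Phi_{c_i}$ (the summands with $c_i>n$ contributing only finite pieces), then apply Remark~\ref{rk:observe}(a) together with the $\iota$-invariance of the ideals $(\Phi_{c_i})$. The paper's proof is just a terser version of the same argument, leaving implicit the bookkeeping of finite error terms that you spell out.
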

\begin{proof}
Since $\left(\Lambda/\Phi_{c_i}\right)_{\Gamma_n}=\Lambda/(\Phi_{c_i},\omega_n)$, where $\omega_n=(1+X)^{p^n}-1$, has finite cardinality for all $c_i>n$,  Theorem~\ref{thm:control} tells us that $\cM(E/F_n)^\vee$ is given by, up to finite modules, $\bigoplus_{c_i\le n}\Lambda/\Phi_{c_i}$. Furthermore, $\Lambda/\Phi_{c_i}^\iota= \Lambda/\Phi_{c_i}$ since the ideal $(\Phi_{c_i})$ is invariant under $\iota$. Hence, the asserted isomorphism follows from Remark~\ref{rk:observe}.
\end{proof}

\section{The characteristic ideal of the dual of the fine Mordell--Weil group over the cyclotomic $\Zp$-extension of $\QQ$}From now on, we take $F=\QQ$. In particular, $F_\infty$ is the cyclotomic $\Zp$-extension of $\QQ$, which we denote by $\Qcyc$. We shall write $\QQ_{(n)}$ for $F_n$. Note that $(E(\Qcyc)\otimes\Qp/\Zp)^\vee$ is a torsion $\Lambda$-module and Theorem~\ref{thm:structure} applies as discussed in Remark~\ref{rk:Kato-Roh}. 

We suppose from now on that $\sha(E/\QQ_{(n)})[p^\infty]$ is finite for all $n\ge0$. In particular, $\zhe(E/\QQ_{(n)})$ is also finite.

For all subextensions $K$ of $\Qcyc$, there is exactly one place lying above $p$. For simplicity, we shall write $K_p$ for the completion of $K$ at this unique place.

\begin{defn}
Let $n\ge0$ be an integer.
\begin{itemize}
    \item We write  $s_n$ for the number of times the summand $\Lambda/\Phi_n$ appearing on the right-hand side of the pseudo-isomorphism given by Theorem~\ref{thm:structure}. 
    \item  As in the introduction, we define $e_n$ to be $$\frac{\rank E(\QQ_{(n)})-\rank E(\QQ_{(n-1)})}{\phi(p^n)},$$
 where $E(\QQ_{(-1)})$ is taken to be the trivial group by convention.
\end{itemize} 
\end{defn}

By analyzing the Galois action on $E(\QQ_{(n)})$, we have the following isomorphism of $\Lambda$-modules
\begin{align*}
 E(\QQ_{(n)})^\star&\cong\bigoplus_{m=0}^n(\Zp[X]/\Phi_m)^{e_m},\\
 E(\QQ_{(n)})^\bullet&\cong\bigoplus_{m=0}^n(\Qp[X]/\Phi_m)^{e_m}.   
\end{align*}

\begin{lemma}\label{lem:surj-Phi_n}
Suppose that $e_n>0$ and let $f_n:E(\QQ_{(n)})^\bullet\rightarrow E(\QQ_{n,p})^\bullet $ denote the natural map induced by the inclusion $E(\QQ_{(n)})\hookrightarrow E(\QQ_{n,p})$. Then there is a $\Lambda$-isomorphism $$(\Image f_n)[\Phi_n]\cong \Qp[X]/\Phi_n.$$
\end{lemma}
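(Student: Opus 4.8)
The plan is to understand the structure of $E(\QQ_{(n)})^\bullet$ as a $\Lambda$-module and trace how the $\Phi_n$-isotypic piece behaves under the localization map $f_n$. Since we are given the decomposition $E(\QQ_{(n)})^\bullet\cong\bigoplus_{m=0}^n(\Qp[X]/\Phi_m)^{e_m}$, and since the cyclotomic polynomials $\Phi_m$ are pairwise coprime over $\Qp$, the $\Phi_n$-torsion submodule of the source is exactly the summand $(\Qp[X]/\Phi_n)^{e_n}$, on which $\Gamma$ acts through characters of exact order $p^n$. The hypothesis $e_n>0$ guarantees this piece is nonzero. The goal $(\Image f_n)[\Phi_n]\cong\Qp[X]/\Phi_n$ asserts that the image of this piece, after localization, is a single copy of $\Qp[X]/\Phi_n$, i.e. that the localization map ``sees'' exactly one $\Phi_n$-line.

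First I would pass to $\Phi_n$-torsion and reduce to studying the restriction of $f_n$ to the $\Phi_n$-part. Localizing at the unique prime above $p$, the target $E(\QQ_{n,p})^\bullet$ is a module over $\Qp[X]$, and I would compute its $\Phi_n$-torsion using the local theory of the formal group: the key input is that $E(\QQ_{n,p})^\bullet$, as a $\Gamma$-representation, has a well-understood structure coming from the local units / the formal group $\hat E(\QQ_{n,p})$, and that its $\Phi_n$-isotypic component is one-dimensional over the relevant cyclotomic field $\Qp(\mu_{p^n})$. Concretely, I expect that $\dim_{\Qp}\left(E(\QQ_{n,p})^\bullet[\Phi_n]\right)=\phi(p^n)$, i.e. a single copy of $\Qp[X]/\Phi_n$, because the local point group grows one cyclotomic layer at a time. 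Since $(\Image f_n)[\Phi_n]$ is a nonzero $\Qp[X]/\Phi_n$-submodule of this one-copy target, and $\Qp[X]/\Phi_n$ is a field (as $\Phi_n$ is irreducible over $\Qp$), any nonzero submodule is the whole thing; hence $(\Image f_n)[\Phi_n]\cong\Qp[X]/\Phi_n$.

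The two things to nail down are therefore: (i) that $\Image f_n$ contains a nonzero $\Phi_n$-torsion element, which should follow from $e_n>0$ together with the injectivity (or controlled kernel) of the global-to-local map on the $\Phi_n$-part, and (ii) that the local target $E(\QQ_{n,p})^\bullet[\Phi_n]$ is exactly one copy of $\Qp[X]/\Phi_n$. For (i), I would argue that a nonzero class in the new $\Phi_n$-eigenspace of $E(\QQ_{(n)})^\bullet$ cannot die entirely upon localization — a point defined over $\QQ_{(n)}$ but not over $\QQ_{(n-1)}$ localizes to a point in $\QQ_{n,p}$ genuinely living at level $n$ — so its image is again $\Phi_n$-torsion and nonzero. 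For (ii), I would invoke the structure of the local points of the formal group over the cyclotomic tower (e.g. via Kobayashi's or Kato's description of $\hat E(\QQ_{n,p})\otimes\Qp$), which pins down the multiplicity of each $\Phi_m$ to be exactly one.

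The main obstacle I anticipate is step (ii): establishing that the $\Phi_n$-multiplicity in the local module $E(\QQ_{n,p})^\bullet$ is exactly one, rather than merely at least one. This is where the precise local structure of $E$ at $p$ (its formal group and the behavior of its points over ramified cyclotomic extensions) must be used, and it is the genuinely arithmetic-local ingredient as opposed to the formal $\Lambda$-module bookkeeping. If multiplicity one at the local level is available from the literature or from a direct dimension count of $\hat E(\QQ_{n,p})\otimes\Qp$, the rest of the argument is a short exercise in the fact that $\Qp[X]/\Phi_n$ is a field and that nonzero submodules of a simple module are everything.
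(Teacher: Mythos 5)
Your proposal is correct and follows essentially the same route as the paper: multiplicity one for $\Phi_n$ in the local module, simplicity of $\Qp[X]/\Phi_n$, and non-vanishing of the image from $e_n>0$. The one step you flag as the crux --- that $E(\QQ_{n,p})^\bullet[\Phi_n]$ is a single copy of $\Qp[X]/\Phi_n$ --- is settled in the paper exactly as you anticipate, by the dimension count you suggest: the formal group logarithm gives $E(\QQ_{n,p})^\bullet\cong\QQ_{n,p}\cong\Qp[G_n]$ as a $\Qp[G_n]$-module (normal basis theorem), so each $\Phi_m$ occurs with multiplicity exactly one.
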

\begin{proof}
For $n\ge0$, let $G_n=\Gal(\QQ_{n,p}/\Qp)$. By standard results on the structure of elliptic curves over local fields, we have a short exact sequence of  $G_n$-modules
\[
0\rightarrow \hat E(\mathfrak{m}_{n,p})\rightarrow E(\QQ_{n,p})\rightarrow (\text{finite group})\rightarrow0,
\]
where $\hat E$ denotes the formal group attached to $E$ at $p$ and $\mathfrak{m}_{n,p}$ is the maximal ideal of the ring of integers of $\QQ_{n,p}$. On tensoring by $\Qp$, the logarithm on $\hat E$ gives the isomorphisms of $\Qp[G_n]$-modules
\[
E(\QQ_{n,p})^\bullet\cong \QQ_{n,p}\cong \Qp[G_n].
\]
Hence, we have the following isomorphisms of $\Lambda$-modules:
\begin{align*}
&E(\QQ_{n,p})^\bullet\cong E(\QQ_{n-1,p})^\bullet\oplus\Qp[X]/\Phi_n    ,\\
&E(\QQ_{n,p})^\bullet[\Phi_n]\cong\Qp[X]/\Phi_n.
\end{align*}
 Furthermore, as $\Qp[X]/\Phi_n$ is an irreducible $G_n$-representation, it is enough to show that $(\Image f_n)[\Phi_n]$ is non-zero. Since $e_n>0$, there exists a non-torsion point $P\in E(\QQ_{(n)})\setminus E(\QQ_{(n-1)})$. In particular, it is a non-torsion point in $E(\QQ_{n,p})\setminus E(\QQ_{n-1,p})$. Therefore, it gives a non-zero element in $(\Image f_n)[\Phi_n]$ as required.
\end{proof}

\begin{corollary}\label{cor:sn-en}
We have $s_n=\max(0,e_n-1)$.
\end{corollary}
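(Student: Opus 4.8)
The plan is to identify $s_n$ with a multiplicity computation carried out entirely over $\Qp$, where the relevant modules are semisimple. First I would record that, by Corollary~\ref{cor:Tate-fine}, the summand $\Lambda/\Phi_n$ occurs in $T_p\cM(E/F_n)$ with multiplicity exactly $s_n$: indeed every index with $c_i=n$ satisfies $c_i\le n$, while no summand $\Lambda/\Phi_{c_i}$ with $c_i\ne n$ contributes a copy of $\Qp[X]/\Phi_n$ after inverting $p$, since $(\Lambda/\Phi_{c_i})\otimes_{\Zp}\Qp\cong \Qp[X]/\Phi_{c_i}$ is $\Phi_{c_i}$-isotypic. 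Consequently $s_n$ equals the multiplicity of $\Qp[X]/\Phi_n$ in $V_p\cM(E/F_n)=T_p\cM(E/F_n)\otimes_{\Zp}\Qp$.

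Next I would linearize the defining sequence of the fine Mordell--Weil group. Writing $\lambda_n$ for the localization map $E(\QQ_{(n)})\otimes\Qp/\Zp\to E(\QQ_{n,p})\otimes\Qp/\Zp$, the definition gives the short exact sequence
\[
0\to \cM(E/F_n)\to E(\QQ_{(n)})\otimes\Qp/\Zp\to \Image(\lambda_n)\to 0.
\]
Applying $V_p=\Hom(\Qp/\Zp,-)\otimes_{\Zp}\Qp$ to this sequence (the higher $\mathrm{Ext}$-terms being finite, hence killed after inverting $p$) and using that $V_p(A\otimes\Qp/\Zp)\cong A^\bullet$ for a finitely generated abelian group $A$, together with the compatibility of $\lambda_n$ with the map $f_n$ of Lemma~\ref{lem:surj-Phi_n}, I would obtain a $\Lambda$-isomorphism $V_p\cM(E/F_n)\cong \ker(f_n)$. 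The point of this step is to move the computation from the $\otimes\Qp/\Zp$ picture, in which $\cM$ is defined, to the semisimple $(-)^\bullet$ picture, in which $f_n$ and the isomorphism $E(\QQ_{(n)})^\bullet\cong\bigoplus_{m=0}^n(\Qp[X]/\Phi_m)^{e_m}$ live.

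With this in hand I would take $\Phi_n$-isotypic multiplicities. Since $\Qp[X]/\Phi_n\cong \Qp(\zeta_{p^n})$ is a field and all modules in sight are finite-dimensional semisimple $\Qp[G_n]$-modules, passing to the $\Phi_n$-isotypic component is exact, so
\[
s_n=\operatorname{mult}_{\Phi_n}\ker(f_n)=\operatorname{mult}_{\Phi_n}E(\QQ_{(n)})^\bullet-\operatorname{mult}_{\Phi_n}\Image(f_n).
\]
The first term is $e_n$ by the displayed decomposition of $E(\QQ_{(n)})^\bullet$. For the second term I would invoke Lemma~\ref{lem:surj-Phi_n}: when $e_n>0$ it gives $(\Image f_n)[\Phi_n]\cong\Qp[X]/\Phi_n$, so the $\Phi_n$-multiplicity of $\Image(f_n)$ is $1$ and hence $s_n=e_n-1$; when $e_n=0$ the source $E(\QQ_{(n)})^\bullet$ has no $\Phi_n$-component, forcing $\operatorname{mult}_{\Phi_n}\Image(f_n)=0$ and $s_n=0$. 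Combining the two cases yields $s_n=\max(0,e_n-1)$.

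I expect the only genuine obstacle to be the second paragraph, namely establishing $V_p\cM(E/F_n)\cong\ker(f_n)$ cleanly: one must check that applying $V_p$ to the defining sequence becomes exact after inverting $p$ and that $V_p(\Image\lambda_n)$ is naturally identified with $\Image(f_n)$, i.e.\ that the $\otimes\Qp/\Zp$ localization map and the linearized map $f_n$ have the same image up to the functor $V_p$. Everything else---reading off multiplicities from Corollary~\ref{cor:Tate-fine} and from the decomposition of $E(\QQ_{(n)})^\bullet$, and the exactness of the isotypic decomposition---is routine linear algebra over the semisimple algebra $\Qp[G_n]$.
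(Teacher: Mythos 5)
Your proposal is correct and follows essentially the same route as the paper: both identify $s_n$ via Corollary~\ref{cor:Tate-fine}, linearize the defining sequence of $\cM(E/\QQ_{(n)})$ to obtain $V_p\cM(E/\QQ_{(n)})=\ker(f_n)$ inside the semisimple $(-)^\bullet$ picture, and then count $\Phi_n$-isotypic multiplicities using Lemma~\ref{lem:surj-Phi_n} to see that $\Image(f_n)$ contributes exactly one copy of $\Qp[X]/\Phi_n$ when $e_n>0$. The only cosmetic difference is that you apply $V_p$ to the divisible-group sequence directly, whereas the paper takes inverse limits of the mod-$p^k$ sequences; these are interchangeable here.
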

\begin{proof}

Consider the tautological exact sequence
\[
0\rightarrow M_{p^k}(E/\QQ_{(n)})\rightarrow E(\QQ_{(n)})/p^k\rightarrow E(\QQ_{n,p})/p^k.
\]
On taking inverse limits as $k$ varies and tensoring by $\Qp$, we deduce the following exact sequence:
\[
0\rightarrow V_p\cM(E/\QQ_{(n)})\rightarrow E(\QQ_{(n)})^\bullet\stackrel{f_n}{\rightarrow} E(\QQ_{n,p})^\bullet,
\]
where $f_n$ is given as in Lemma~\ref{lem:surj-Phi_n}.

On the one hand, Corollary~\ref{cor:Tate-fine} says that
\[
V_p\cM(E/\QQ_{(n)})\cong \bigoplus_{m=0}^n(\Qp[X]/\Phi_m)^{s_m}.
\]
On the other hand, we have the following isomorphism of $\Lambda$-modules
\[
E(\QQ_{(n)})^\bullet\cong \bigoplus_{m=0}^n(\Qp[X]/\Phi_m)^{e_m}.
\]
Therefore, on taking the kernels of $\Phi_n$, we obtain the exact sequence of $\Lambda$-modules
\[
0\rightarrow (\Qp[X]/\Phi_n)^{s_n}\rightarrow (\Qp[X]/\Phi_n)^{e_n}\rightarrow\Qp[X]/\Phi_n.
\]
Clearly, if $e_n=0$, we must have $s_n=0$. Otherwise, the last map is a surjection by Lemma~\ref{lem:surj-Phi_n}, forcing $s_n=e_n-1$. Hence the result follows.
\end{proof}

\begin{remark}
Let $r_n$ denote the number of times the direct summand  $\Lambda/\Phi_n$ appears in the pseudo-isomorphism \eqref{eq:pseudo-Lee}. 
If $E$ has good ordinary reduction at $p$, then  $e_n=r_n$ by \cite[Remark~2.1.6(2)]{lee20}. Therefore, we have $s_n=\max(0,r_n-1)$. If the fine Mordell--Weil groups do not satisfy a control theorem, it is not clear to us how one may relate $e_n$ to $r_n$ for an arbitrary $n\ge0$.
 \end{remark}

We can now prove Theorem~\ref{thmC}:

\begin{theorem}\label{thm:char-fineMW}
Suppose that $\sha(E/\QQ_{(n)})[p^\infty]$ is finite for all $n\ge0$. Then,
\[
\Char_\Lambda \cM(E/\Qcyc)^\vee=\left(\prod_{e_n>0}\Phi_n^{e_n-1}\right).
\]
\end{theorem}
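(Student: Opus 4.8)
The plan is to assemble Theorem~\ref{thm:char-fineMW} directly from the structure theorem (Theorem~\ref{thm:structure}) together with the arithmetic computation of the multiplicities $s_n$ carried out in Corollary~\ref{cor:sn-en}. Since $F=\QQ$ and $F_\infty=\Qcyc$, Remark~\ref{rk:Kato-Roh} guarantees that $r=0$ in the pseudo-isomorphism \eqref{eq:pseudo-Lee}, so Theorem~\ref{thm:structure} applies and yields
\[
\cM(E/\Qcyc)^\vee\sim\bigoplus_{i=1}^u\Lambda/\Phi_{c_i}.
\]
The characteristic ideal of a $\Lambda$-module is a pseudo-isomorphism invariant, so it suffices to read off the characteristic ideal of the right-hand side.

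First I would recall that, by the very definition of $s_n$ as the number of summands $\Lambda/\Phi_n$ occurring on the right-hand side of the pseudo-isomorphism of Theorem~\ref{thm:structure}, the displayed decomposition can be rewritten as $\bigoplus_{n\ge0}(\Lambda/\Phi_n)^{s_n}$. Each cyclotomic factor $\Phi_n$ is a distinguished polynomial, hence $\Lambda/\Phi_n$ is a cyclic torsion $\Lambda$-module with characteristic ideal $(\Phi_n)$. Taking the product over all summands gives
\[
\Char_\Lambda\cM(E/\Qcyc)^\vee=\left(\prod_{n\ge0}\Phi_n^{s_n}\right).
\]

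Next I would substitute the evaluation $s_n=\max(0,e_n-1)$ supplied by Corollary~\ref{cor:sn-en}. The hypothesis that $\sha(E/\QQ_{(n)})[p^\infty]$ is finite for all $n\ge0$ is exactly what makes this corollary available: it forces $\zhe(E/\QQ_{(n)})$ to be finite (as recorded in the standing assumption preceding Lemma~\ref{lem:surj-Phi_n}), which is the input needed for Corollary~\ref{cor:Tate-fine} and in turn for Corollary~\ref{cor:sn-en}. Whenever $e_n=0$ we have $s_n=0$, so those indices contribute a trivial factor $\Phi_n^0$; whenever $e_n>0$ we have $s_n=e_n-1$. The product therefore collapses to a product over the indices with $e_n>0$:
\[
\prod_{n\ge0}\Phi_n^{s_n}=\prod_{e_n>0}\Phi_n^{e_n-1}.
\]
Combining the two displays gives the asserted equality of ideals.

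I do not expect a genuine obstacle here: the theorem is a formal consequence of results already established, and the entire content of the argument is the bookkeeping of multiplicities. The one point that deserves care is to confirm that passing from a pseudo-isomorphism to an equality of characteristic ideals is legitimate, namely that $\Char_\Lambda$ is invariant under pseudo-isomorphism of finitely generated torsion $\Lambda$-modules, and that it is multiplicative on direct sums; both are standard structural facts about the Noetherian regular local ring $\Lambda\cong\Zp[[X]]$. I would also make explicit that the finiteness hypothesis on $\sha$ enters solely through its consequence on $\zhe$, so that no separate control argument is required beyond what Corollary~\ref{cor:sn-en} already incorporates.
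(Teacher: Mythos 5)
Your proposal is correct and follows essentially the same route as the paper: the paper's own proof likewise observes that $\Char_\Lambda\cM(E/\Qcyc)^\vee=\left(\prod_{n\ge0}\Phi_n^{s_n}\right)$ by the definition of $s_n$ and then invokes Corollary~\ref{cor:sn-en}. Your additional remarks on tracing the $\sha$ hypothesis through $\zhe$ and on the pseudo-isomorphism invariance of $\Char_\Lambda$ are accurate but only make explicit what the paper leaves implicit.
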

\begin{proof}
By the definition of $s_n$, we have
\[
\Char_\Lambda\cM(E/\Qcyc)^\vee=\left(\prod_{n\ge0}\Phi_n^{s_n}\right).
\]
Hence, the theorem follows from Corollary~\ref{cor:sn-en}.
\end{proof}

\section{Definition of plus and minus Mordell--Weil groups and basic properties}
We now suppose that $E/\QQ$ has good supersingular reduction at $p$ with $a_p(E)=0$. We continue to employ the notation ($\QQ_{(n)}$, $\Qcyc$, $\QQ_{n,p}$, etc) introduced in the previous section.
\begin{defn}
 We define
 \[
 \cM^\pm(E/\Qcycp)=\bigcup_{n\ge0}E^\pm(\QQ_{n,p})\otimes\Qp/\Zp,
 \]
 where $E^\pm(\QQ_{n,p})$ are Kobayashi's plus and minus subgroups defined in \cite[Definition~1.1]{kobayashi03}. 
\end{defn}

We recall that via the Kummer map, $\cM^\pm(E/\Qcycp)$ can be considered as subgroups of $H^1(\QQ_{\cyc,p}, \Ep)$. Furthermore, since $E(\Qcycp)[p^\infty]=0$ (see \cite[Proposition~8.7]{kobayashi03}), there is an isomorphism
\[
H^1(\Qcycp,\Ep)^{\Gamma_n}\cong H^1(\QQ_{n,p},\Ep)
\]
given by the inflation-restriction exact sequence. Similarly, the exact sequence
\[
0\rightarrow E[p^k]\rightarrow \Ep\stackrel{\times p^k}\longrightarrow \Ep\rightarrow 0
\]
gives the isomorphism
\[
H^1(\QQ_{n,p},E[p^k])\cong H^1(\QQ_{n,p},\Ep)[p^k].
\]
These observations enable us to define the following:

\begin{defn}\label{def:pm}
We define for $n\ge0$
 \[
 \cM^\pm(E/\QQ_{n,p}):=\cM^\pm(E/\QQ_{\cyc,p})^{\Gal(\Qcycp/\QQ_{n,p})}\subset H^1(\QQ_{n,p},\Ep).
 \]
If  $k\ge1$ is an integer, we further define
 \[
 \cM_{p^k}^\pm(E/\QQ_{n,p}):=\cM^\pm(E/\QQ_{n,p})[p^k]\subset H^1(\QQ_{n,p},E[p^k]).
 \]

 Similar to \cite{wut-camb}, we define the $p^k$- and $p$-primary plus and minus Mordell--Weil groups of $E$ over $\QQ_{(n)}$ by
 \begin{align*}
       \cM_{p^k}^\pm(E/\QQ_{(n)})&:=\ker\left(E(\QQ_{(n)})/p^k\rightarrow \frac{H^1(\QQ_{n,p},E[p^k])}{\cM_{p^k}^\pm(E/\QQ_{n,p})}\right),\\
      \cM^\pm(E/\QQ_{(n)})&:=\varinjlim_k\cM_{p^k}^\pm(E/\QQ_{(n)})=\ker\left(E(\QQ_{(n)})\otimes\Qp/\Zp\rightarrow
      \frac{H^1(\QQ_{n,p},E[p^\infty])}{\cM^\pm(E/\QQ_{n,p})}\right).
 \end{align*}

 We  define the ($p$-primary) plus and minus Selmer groups   of $E$ over $\QQ_{(n)}$, denoted by $\Sel_{p^\infty}^\pm(E/\QQ_{(n)})$, to be the kernel
 \[\ker\left(H^1(\QQ_{(n)},\Ep)\rightarrow \frac{H^1(\QQ_{n,p},E[p^\infty])}{\cM^\pm(E/\QQ_{n,p})}\times\prod_{v\nmid p}H^1(\QQ_{(n),v},\Ep)\right).
 \]

 We define similarly $\cM^\pm(E/\Qcyc)$ and $\Sel_{p^\infty}(E/\Qcyc)$ on replacing the local conditions $\cM^\pm(E/\QQ_{n,p})$ by $\cM^\pm(E/\Qcycp)$.
 \end{defn}
 After identifying $E(\QQ_{(n)})\otimes\Qp/\Zp$ with a subgroup of $H^1(\QQ_{(n)},\Ep)$ via the Kummer map, we see that $\cM^\pm(E/\QQ_{(n)})$ is a subgroup of $\Sel^\pm(E/\QQ_{(n)})$. This allows us to give the following definition:
 \begin{defn}
For $K=\QQ_{(n)}$ or $\Qcyc$, we define the ($p$-primary) plus and minus  Tate--Shafarevich groups $\zhe^\pm(E/K)$ of $E$ over $K$ to be the quotients $\Sel_{p^\infty}^\pm(E/K)/\cM^\pm(E/K)$.
\end{defn}
As before, we omit the terminology $p$-primary from our discussion.
\begin{remark}\label{rk:pm}We make a number of remarks on the plus and minus objects that we have just defined.
\begin{itemize}
    \item We recall from \cite[Propositions~2.2 and 2.3]{kimdoublysigned} that $\cM^\pm(E/\QQ_{\cyc,p})^\vee$ is free of rank one over $\Lambda$, so $\cM^\pm(E/\QQ_{n,p})^\vee\cong \Zp[X]/\omega_n$, where $\omega_n=(1+X)^{p^n}-1$.
    \item  Analogous to \eqref{eq:fineSel}, we have the tautological short exact sequence:
\begin{equation}
    0\rightarrow \cM^\pm(E/K)\rightarrow \Sel_{p^\infty}^\pm(E/K)\rightarrow \zhe^\pm(E/K)\rightarrow0
\label{eq:SES-pm}
\end{equation}
for $K=\QQ_{(n)}$ or $\Qcyc$.
\item The plus and minus Selmer groups over $\QQ_{(n)}$ that we have  defined are not the same as the ones considered in \cite{kobayashi03} since $E^\pm(\QQ_{n,p})\otimes\Qp/\Zp$ is a proper subgroup of $\cM^\pm(E/\QQ_{n,p})$ unless $n=0$ or $\infty$.
\item Similar plus and minus Mordell--Weil groups as well as plus and minus Tate--Shafarevich groups were also studied in \cite{HLV} in the anticyclotomic setting (see in particular Definitions~4.1 and~5.11 in op. cit.).
\end{itemize}

\end{remark}

We have the following analogues of Theorems~\ref{thm:structure} and \ref{thm:control}:
\begin{theorem} \label{thm:structure-pm} Let $m_n^\pm:\cM^\pm(E/\QQ_{(n)})\rightarrow \cM^\pm(E/\Qcyc)^{\Gamma_n}$ be the natural morphism. 
\begin{itemize}
    \item[(a)]We have a pseudo-isomorphism
    \[
    \cM^\pm(E/\Qcyc)^\vee\sim \bigoplus_{i=1}^{u^\pm}\Lambda/\Phi_{c^\pm_i}
    \]
    for certain non-negative integers $u^\pm$ and $c_i^\pm$.
    \item[(b)] The kernel of $m_n^\pm$ is $0$ for all $n$. 
    \item[(c)]Suppose that $\zhe^\pm(E/\QQ_{(n)})$ is finite, then the cokernel of $m_n^\pm$ is finite.
\end{itemize}
\end{theorem}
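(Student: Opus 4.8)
The plan is to handle the three parts in turn, reusing the arguments behind Theorems~\ref{thm:structure} and~\ref{thm:control}.

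Part (a) is proved exactly as Theorem~\ref{thm:structure}. By Definition~\ref{def:pm}, $\cM^\pm(E/\Qcyc)$ is a subgroup of $E(\Qcyc)\otimes\Qp/\Zp$, so $\cM^\pm(E/\Qcyc)^\vee$ is a quotient of $\left(E(\Qcyc)\otimes\Qp/\Zp\right)^\vee$. Since $F=\QQ$, Remark~\ref{rk:Kato-Roh} gives $r=0$ in \eqref{eq:pseudo-Lee}, so this dual is pseudo-isomorphic to a direct sum of the simple cyclic modules $\Lambda/\Phi_{b_i}$; a quotient of such a module is then pseudo-isomorphic to a partial direct sum, giving (a). For part (b), I would show that $m_n^\pm$ is injective. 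Via the Kummer map, $\cM^\pm(E/\QQ_{(n)})$ and $\cM^\pm(E/\Qcyc)$ embed into $H^1(\QQ_{(n)},\Ep)$ and $H^1(\Qcyc,\Ep)$ respectively, and $m_n^\pm$ is induced by the restriction map $H^1(\QQ_{(n)},\Ep)\to H^1(\Qcyc,\Ep)^{\Gamma_n}$. Inflation--restriction identifies the kernel of the latter with $H^1(\Gamma_n,E(\Qcyc)[p^\infty])$. As $E(\Qcyc)$ embeds into $E(\Qcycp)$ and $E(\Qcycp)[p^\infty]=0$ by \cite[Proposition~8.7]{kobayashi03}, we get $E(\Qcyc)[p^\infty]=0$, so this kernel vanishes and $m_n^\pm$ is injective.

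For part (c), I would run the snake lemma on the diagram obtained from \eqref{eq:SES-pm} by comparing the level-$n$ sequence with the $\Gamma_n$-invariants of the sequence over $\Qcyc$, exactly as in Theorem~\ref{thm:control}. Using $\ker m_n^\pm=0$ from part (b), this reduces the finiteness of $\coker m_n^\pm$ to two statements: the finiteness of $\ker\gamma_n^\pm$, which is immediate from the hypothesis that $\zhe^\pm(E/\QQ_{(n)})$ is finite, and a control theorem for the plus/minus Selmer groups, namely that the cokernel of $\beta_n^\pm\colon\Sel^\pm(E/\QQ_{(n)})\to\Sel^\pm(E/\Qcyc)^{\Gamma_n}$ is finite.

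The heart of the matter, and the main obstacle, is this Selmer control statement: it is covered neither by Lim's result (the local condition at $p$ differs from the fine one) nor by Kobayashi's (our Selmer group differs from his, as noted in Remark~\ref{rk:pm}). I would prove it with the usual global-to-local diagram. The global restriction $H^1(\QQ_{(n)},\Ep)\to H^1(\Qcyc,\Ep)^{\Gamma_n}$ is an isomorphism, since $E(\Qcyc)[p^\infty]=0$ forces both its kernel and cokernel to vanish; a diagram chase then shows $\coker\beta_n^\pm$ injects into the kernel of the product $g_n$ of local restriction maps, so it suffices to prove $\ker g_n$ is finite. Away from $p$ the local conditions coincide with those of the fine Selmer group, so these local kernels are finite, and vanish for almost all $v$, by the same analysis as in \cite{Lim-control}. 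At $p$, the crucial point is that Definition~\ref{def:pm} sets $\cM^\pm(E/\QQ_{n,p})=\cM^\pm(E/\Qcycp)^{\Gamma_n}$; combined with the isomorphism $H^1(\QQ_{n,p},\Ep)\cong H^1(\Qcycp,\Ep)^{\Gamma_n}$, the five lemma applied to the short exact sequences defining the quotients $H^1/\cM^\pm$ at levels $n$ and $\infty$ shows that the local restriction map at $p$ is injective. Hence $\ker g_n$ is finite, $\coker\beta_n^\pm$ is finite, and part (c) follows. The delicate step is precisely this injectivity at $p$: it works only because the local condition is defined as the exact $\Gamma_n$-invariants, so that no error term from $H^1(\Gamma_n,\cM^\pm(E/\Qcycp))$ contributes to the kernel.
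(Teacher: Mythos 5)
Your proposal is correct, and parts (a) and (b) coincide with the paper's argument: (a) is the quotient-of-a-torsion-module argument from Theorem~\ref{thm:structure} combined with Remark~\ref{rk:Kato-Roh}, and (b) rests on the vanishing of $H^0(\Qcyc,\Ep)$ together with inflation--restriction (the paper phrases this as injectivity of $\beta_n^\pm$, from which $\ker m_n^\pm=0$ follows via the comparison diagram, but the content is the same). The difference is in part (c). Both you and the paper run the snake lemma on the diagram obtained from \eqref{eq:SES-pm}, reducing everything to the finiteness of $\coker\beta_n^\pm$; the paper simply quotes \cite[Lemma~2.3]{ponsinet} for this, whereas you prove the plus/minus Selmer control statement from scratch via the global-to-local diagram. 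Your argument is sound: the global restriction $H^1(\QQ_{(n)},\Ep)\to H^1(\Qcyc,\Ep)^{\Gamma_n}$ is indeed an isomorphism since $E(\Qcyc)[p^\infty]=0$ and $\cd_p(\Gamma_n)=1$; the local kernels away from $p$ are handled exactly as in \cite{Lim-control} because the local conditions there agree with the fine ones; and your key observation at $p$ --- that $\cM^\pm(E/\QQ_{n,p})$ is \emph{defined} as $\cM^\pm(E/\Qcycp)^{\Gamma_n}$, so that $H^1(\QQ_{n,p},\Ep)/\cM^\pm(E/\QQ_{n,p})$ injects into $\bigl(H^1(\Qcycp,\Ep)/\cM^\pm(E/\Qcycp)\bigr)^{\Gamma_n}$ by left-exactness of $\Gamma_n$-invariants --- is precisely why no error term from $H^1(\Gamma_n,\cM^\pm(E/\Qcycp))$ enters the kernel. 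What your route buys is a self-contained proof that makes transparent why the ``exact invariants'' convention in Definition~\ref{def:pm} matters; what the paper's route buys is brevity, at the cost of outsourcing this verification to Ponsinet's lemma.
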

\begin{proof}
Part (a) follows from the same proof as Theorem~\ref{thm:structure} since $ \cM^\pm(E/\Qcyc)^\vee$ is a quotient of $(E(\Qcyc)\otimes\Qp/\Zp)^\vee$, which is torsion over $\Lambda$ as discussed in Remark~\ref{rk:Kato-Roh}.

The proofs for parts (b) and (c) are similar to Theorem~\ref{thm:control}. Consider the commutative diagram
\[
\xymatrix{
0\ar[r]&\cM^\pm(E/\Qcyc)^{\Gamma_n}\ar[r]&\Sel_{p^\infty}^\pm(E/\Qcyc)^{\Gamma_n}\ar[r]&\zhe^\pm(E/\Qcyc)^{\Gamma_n}\\
0\ar[r]&\cM^\pm(E/\QQ_{(n)})\ar[u]^{m_n^\pm}\ar[r]&\Sel_{p^\infty}^\pm(E/\QQ_{(n)})\ar[u]^{\beta^\pm_n}\ar[r]&\zhe^\pm(E/\QQ_{(n)})\ar[u]\ar[r]&0.
}
\]
The map $\beta^\pm_n$ is injective since  $H^1(\QQ_{(n)},\Ep)\rightarrow H^1(\Qcyc,\Ep)$ is injective, which is a consequence of the inflation-restriction exact sequence and the fact that $H^0(\Qcyc,\Ep)=0$. This proves part (b). By \cite[Lemma~2.3]{ponsinet}, the cokernel of $\beta_n$ is finite. Therefore, part (c) follows from the snake lemma.
\end{proof}

\begin{remark}\label{rk:Tp-pm}
The proof of Corollary~\ref{cor:Tate-fine} can be applied to $\cM^\pm(E/\QQ_{(n)})$, telling us that
\[
T_p\cM^\pm(E/\QQ_{(n)})=\bigoplus_{c_i^\pm\le n}\Lambda/\Phi_{c_i^\pm}.
\]
\end{remark}

Similar to Remark~\ref{rk:finite}, $\zhe^\pm(E/\QQ_{(n)})$ is finite if  $\sha(E/K)[p^\infty]$ is finite, thanks to the following lemma.
\begin{lemma}
Let $K=\QQ_{(n)}$ or $\Qcyc$. There is an inclusion $\zhe^\pm(E/K)\subset \sha(E/K)[p^\infty]$.
\end{lemma}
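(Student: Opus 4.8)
The plan is to establish the inclusion $\zhe^\pm(E/K)\subset \sha(E/K)[p^\infty]$ by comparing the defining local conditions of the plus/minus Selmer group with those of the classical $p$-primary Selmer group. Recall that $\zhe^\pm(E/K)=\Sel_{p^\infty}^\pm(E/K)/\cM^\pm(E/K)$, and that by definition $\cM^\pm(E/K)$ contains $E(K)\otimes\Qp/\Zp$ as a subgroup (indeed the latter maps into the plus/minus local condition at $p$ via Kummer theory). The key observation is that the plus/minus local condition at $p$, namely $\cM^\pm(E/K_p)$, is contained in the image of the full local Kummer map $E(K_p)\otimes\Qp/\Zp\hookrightarrow H^1(K_p,\Ep)$; consequently every class in $\Sel_{p^\infty}^\pm(E/K)$ is automatically a class in the classical $p$-primary Selmer group $\Sel_{p^\infty}(E/K)$, since the local conditions away from $p$ coincide and the condition at $p$ is more restrictive.

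First I would make precise the containment $\Sel_{p^\infty}^\pm(E/K)\subset \Sel_{p^\infty}(E/K)$. The classical Selmer group is cut out by requiring that a global class in $H^1(K,\Ep)$ land in the image of $E(K_v)\otimes\Qp/\Zp$ at every place $v$, whereas $\Sel_{p^\infty}^\pm$ imposes at the place above $p$ the strictly smaller condition $\cM^\pm(E/K_p)$ and the trivial-away-from-$p$ conditions $H^1(\QQ_{(n),v},\Ep)$ (i.e.\ no condition) at $v\nmid p$. Since $\cM^\pm(E/K_p)$ is by construction a subgroup of $E(K_p)\otimes\Qp/\Zp$, the plus/minus local condition at $p$ refines the classical one, and the away-from-$p$ conditions agree; hence any plus/minus Selmer class is a classical Selmer class. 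Then I would pass to the quotients: the Kummer image $E(K)\otimes\Qp/\Zp$ sits inside both $\cM^\pm(E/K)$ and $\Sel_{p^\infty}(E/K)$, and $\sha(E/K)[p^\infty]=\Sel_{p^\infty}(E/K)/\bigl(E(K)\otimes\Qp/\Zp\bigr)$ by the classical descent exact sequence recalled in the introduction.

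The final step is diagram-chasing the inclusions. Since $\cM^\pm(E/K)$ contains $E(K)\otimes\Qp/\Zp$, and since $\Sel_{p^\infty}^\pm(E/K)\subset \Sel_{p^\infty}(E/K)$, the quotient map realizes
\[
\zhe^\pm(E/K)=\frac{\Sel_{p^\infty}^\pm(E/K)}{\cM^\pm(E/K)}\hookrightarrow \frac{\Sel_{p^\infty}(E/K)}{E(K)\otimes\Qp/\Zp}=\sha(E/K)[p^\infty],
\]
where injectivity of the induced map follows because the numerator inclusion and the denominator inclusion are compatible, so the preimage of the trivial class in $\sha$ consists exactly of the plus/minus Selmer classes lying in $E(K)\otimes\Qp/\Zp\cap\Sel_{p^\infty}^\pm(E/K)$, which is contained in $\cM^\pm(E/K)$.

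The main obstacle I anticipate is verifying the compatibility of the denominators cleanly, i.e.\ checking that $\bigl(E(K)\otimes\Qp/\Zp\bigr)\cap \Sel_{p^\infty}^\pm(E/K)$ really equals $\cM^\pm(E/K)$ rather than something larger, so that the induced map on quotients is genuinely injective and not merely a map with controllable kernel. This amounts to confirming that a global point whose image lands in the plus/minus local condition at $p$ automatically defines a class in $\cM^\pm(E/K)$, which is exactly the defining property of $\cM^\pm(E/K)$ as the kernel of the map to $H^1(\QQ_{n,p},\Ep)/\cM^\pm(E/\QQ_{n,p})$. Once this identification of the intersection is in place, the inclusion of Tate--Shafarevich groups is immediate, and the finiteness consequence (under finiteness of $\sha(E/K)[p^\infty]$) follows.
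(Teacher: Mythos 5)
Your argument is correct and follows essentially the same route as the paper: the paper applies the snake lemma to the descent sequence $0\to E(K)\otimes\Qp/\Zp\to\Sel_{p^\infty}(E/K)\to\sha(E/K)[p^\infty]\to0$ mapped to $H^1(K_p,\Ep)/\cM^\pm(E/K_p)$, which is precisely your diagram chase, with your key identification $\bigl(E(K)\otimes\Qp/\Zp\bigr)\cap\Sel_{p^\infty}^\pm(E/K)=\cM^\pm(E/K)$ appearing there as the kernel of the left vertical map. The only point deserving an explicit word (which the paper likewise leaves implicit when it passes from the intersection $\Sel_{p^\infty}^\pm\cap\Sel_{p^\infty}$ to $\Sel_{p^\infty}^\pm$ itself) is that $\cM^\pm(E/K_p)\subset E(K_p)\otimes\Qp/\Zp$ is not purely "by construction" for $K=\QQ_{(n)}$ with $0<n<\infty$, since $\cM^\pm(E/\QQ_{n,p})$ is defined via $\Gamma_n$-invariants over $\Qcycp$ and one must identify $(E(\Qcycp)\otimes\Qp/\Zp)^{\Gamma_n}$ with $E(\QQ_{n,p})\otimes\Qp/\Zp$.
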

\begin{proof}
Our argument follows closely to the one presented in \cite[P.3]{wut-camb}. Let $\Sel_{p^\infty}(E/K)$ be the classical $p$-primary Selmer group of $E$ over $K$, namely 
\[
\ker\left(H^1(K,\Ep)\rightarrow\prod_{v}\frac{H^1(K_v,\Ep)}{E(K_v)\otimes\Qp/\Zp}\right).
\]
On comparing this with the definition of $\Sel^\pm_{p^\infty}(E/K)$, we see that
\[
\Sel_{p^\infty}^\pm(E/K)\bigcap\Sel_{p^\infty}(E/K)=\ker\left(\Sel_{p^\infty}(E/K)\rightarrow \frac{H^1(K_p,\Ep)}{\cM^\pm(E/K_p)}\right)
\]
since $E(K_v)\otimes\Qp/\Zp=0$ for $v\nmid p$.
Now consider the commutative diagram
\[
\xymatrix{
0\ar[r]&E(K)\otimes\Qp/\Zp\ar[r]\ar[d]&\Sel_{p^\infty}(E/K)\ar[r]\ar[d]&\sha(E/K)[p^\infty]\ar[r]\ar[d]&0\\
0\ar[r]&\frac{H^1(K_p,\Ep)}{\cM^\pm(E/K_p)}\ar[r]^=& \frac{H^1(K_p,\Ep)}{\cM^\pm(E/K_p)}\ar[r]&0.&
}
\]
The snake lemma gives the following exact sequence
\[
0\rightarrow \cM^\pm(E/K)\rightarrow \Sel_{p^\infty}^\pm(E/K)\bigcap\Sel_{p^\infty}(E/K)\rightarrow \sha(E/K)[p^\infty],
\]
from which the result follows.
\end{proof}

In our proof of Theorem~\ref{thmD}, we shall utilize the following "modulo $p^k$" control theorem:

\begin{proposition}\label{prop:control}
Let $n\ge0$ and $k\ge1$ be integers.
The natural map $E(\QQ_{(n)})/p^k\rightarrow E(\QQ_{(n)})\otimes\Qp/\Zp$ given by $P\mod p^k\mapsto P\otimes p^{-k}$ is injective, with image given by $\left(E(\QQ_{(n)})\otimes\Qp/\Zp\right)[p^k]$. Furthermore, it induces the following isomorphisms of $\Lambda$-modules:
\begin{align*}
    \cM_{p^k}^\pm(E/\QQ_{(n)})&\cong \cM^\pm(E/\QQ_{(n)})[p^k],\\
      \cM_{p^k}(E/\QQ_{(n)})&\cong \cM(E/\QQ_{(n)})[p^k].
\end{align*}
\end{proposition}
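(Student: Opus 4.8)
The plan is to establish the statement in two stages: first the assertion about the map $E(\QQ_{(n)})/p^k\rightarrow E(\QQ_{(n)})\otimes\Qp/\Zp$, and then deduce the two isomorphisms of fine and plus/minus Mordell--Weil groups from it by a diagram chase. For the first stage, I would begin with the short exact sequence
\[
0\rightarrow E(\QQ_{(n)})/p^k\xrightarrow{\ \times p^k\ }E(\QQ_{(n)})\otimes\Qp/\Zp\xrightarrow{\ \cdot p^k\ }E(\QQ_{(n)})\otimes\Qp/\Zp\rightarrow0,
\]
obtained by tensoring the exact sequence $0\to\ZZ\xrightarrow{p^k}\ZZ\to\ZZ/p^k\to0$ against $E(\QQ_{(n)})$ and using that $\Qp/\Zp$ is divisible (so $\mathrm{Tor}_1^\ZZ(E(\QQ_{(n)}),\Qp/\Zp)$ contributes the displayed kernel $E(\QQ_{(n)})[p^k]/\ldots$; more cleanly, $E(\QQ_{(n)})\otimes\Qp/\Zp=\varinjlim_j E(\QQ_{(n)})/p^j$ and the map $P\bmod p^k\mapsto P\otimes p^{-k}$ is the canonical map into the direct limit). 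This identifies the map as injective with image exactly the $p^k$-torsion $\bigl(E(\QQ_{(n)})\otimes\Qp/\Zp\bigr)[p^k]$, which is precisely the content of the first sentence.

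For the second stage, I would exploit that both $\cM_{p^k}^\pm$ and $\cM^\pm$ (and likewise the fine versions without $\pm$) are defined as kernels of restriction-to-the-local-condition maps that are compatible under the identifications already recorded in Definition~\ref{def:pm}, namely $\cM_{p^k}^\pm(E/\QQ_{n,p})=\cM^\pm(E/\QQ_{n,p})[p^k]$ and $H^1(\QQ_{n,p},E[p^k])\cong H^1(\QQ_{n,p},\Ep)[p^k]$. Concretely, I would write down the commutative diagram
\[
\xymatrix{
0\ar[r]&\cM_{p^k}^\pm(E/\QQ_{(n)})\ar[r]\ar[d]&E(\QQ_{(n)})/p^k\ar[r]\ar[d]&\dfrac{H^1(\QQ_{n,p},E[p^k])}{\cM_{p^k}^\pm(E/\QQ_{n,p})}\ar[d]\\
0\ar[r]&\cM^\pm(E/\QQ_{(n)})[p^k]\ar[r]&\bigl(E(\QQ_{(n)})\otimes\Qp/\Zp\bigr)[p^k]\ar[r]&\dfrac{H^1(\QQ_{n,p},\Ep)}{\cM^\pm(E/\QQ_{n,p})}[p^k]
}
\]
where the middle vertical arrow is the isomorphism onto $p^k$-torsion from stage one, and the right vertical arrow is induced by the $p^k$-torsion identification on cohomology. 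The left vertical arrow is then forced, and it is an isomorphism by the five lemma (or a direct kernel chase) once I check that the rightmost vertical map is injective on the relevant $p^k$-torsion. The fine case is formally identical, replacing the local condition $\cM^\pm(E/\QQ_{n,p})$ by $\bigoplus_{v\mid p}E(\QQ_{n,p})\otimes\Qp/\Zp$ and invoking Definition~\ref{defn:fine}.

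The step I expect to require the most care is verifying that the right-hand vertical map is compatible with taking $p^k$-torsion, i.e.\ that forming the quotient by the local subgroup commutes appropriately with the functor $[p^k]$. The subtlety is that $[p^k]$ is only left exact, so passing to $p^k$-torsion of a quotient $H^1/\cM^\pm$ can introduce a connecting term; I would control this by noting that $\cM^\pm(E/\QQ_{n,p})^\vee\cong\Zp[X]/\omega_n$ is $\ZZ_p$-free (Remark~\ref{rk:pm}), so $\cM^\pm(E/\QQ_{n,p})$ is $p$-divisible as a cofree module and the relevant $\mathrm{Tor}/\mathrm{Ext}$ obstruction vanishes, giving $\bigl(H^1/\cM^\pm\bigr)[p^k]\cong H^1[p^k]/\cM^\pm[p^k]$ cleanly. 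With that exactness in hand, the diagram chase is routine and both isomorphisms follow.
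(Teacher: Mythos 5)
Your overall strategy is the same as the paper's: identify $E(\QQ_{(n)})/p^k$ with the $p^k$-torsion of $E(\QQ_{(n)})\otimes\Qp/\Zp$, then run a snake-lemma/kernel chase on the two-row diagram whose rows define $\cM^\pm_{p^k}$ and $\cM^\pm[p^k]$. The one genuine gap is in your first stage. The sequence
\[
0\rightarrow E(\QQ_{(n)})/p^k\rightarrow E(\QQ_{(n)})\otimes\Qp/\Zp\xrightarrow{\ p^k\ }E(\QQ_{(n)})\otimes\Qp/\Zp\rightarrow0
\]
is \emph{not} exact for an arbitrary abelian group $A$ in place of $E(\QQ_{(n)})$: the $\mathrm{Tor}$ long exact sequence shows that the kernel of $A/p^k\rightarrow A\otimes\Qp/\Zp$ is the cokernel of $p^k$ acting on $A[p^\infty]$ (for $A=\ZZ/p$ the map $A/p\rightarrow A\otimes\Qp/\Zp=0$ is visibly not injective). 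You acknowledge this $\mathrm{Tor}_1$ contribution parenthetically ("contributes the displayed kernel $E(\QQ_{(n)})[p^k]/\ldots$") but never say why it vanishes. The missing ingredient --- which is the entirety of the paper's proof of the first assertion --- is that $E(\QQ_{(n)})[p^\infty]=0$; this holds in the present section because $E$ is supersingular at $p$ and $E(\Qcycp)[p^\infty]=0$ by \cite[Proposition~8.7]{kobayashi03}, already invoked just before Definition~\ref{def:pm}. One line citing this fact repairs the argument; the same fact is what makes the image of the map equal to the full $p^k$-torsion subgroup.

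Your second stage is correct and coincides with the paper's proof, with one harmless inefficiency: you labour over the identification $\bigl(H^1(\QQ_{n,p},\Ep)/\cM^\pm(E/\QQ_{n,p})\bigr)[p^k]\cong H^1(\QQ_{n,p},\Ep)[p^k]/\cM^\pm(E/\QQ_{n,p})[p^k]$ via the divisibility of $\cM^\pm(E/\QQ_{n,p})$ (correctly deduced from the cofreeness in Remark~\ref{rk:pm}), but the kernel chase only requires the right-hand vertical arrow
\[
\frac{H^1(\QQ_{n,p},E[p^k])}{\cM^\pm_{p^k}(E/\QQ_{n,p})}\longrightarrow \frac{H^1(\QQ_{n,p},\Ep)}{\cM^\pm(E/\QQ_{n,p})}
\]
to be injective, and this is immediate from Definition~\ref{def:pm}: the source is $H^1(\QQ_{n,p},\Ep)[p^k]$ modulo its intersection with $\cM^\pm(E/\QQ_{n,p})$, so injectivity is the second isomorphism theorem. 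This is exactly how the paper argues ("by definitions, we have an injection") before applying the snake lemma, and your reduction of the fine case to the same formal argument matches the paper's remark that the fine proof is identical.
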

\begin{proof}
The first assertion of the proposition is a consequence of $E(\QQ_{(n)})[p^\infty]=0$. For the second assertion, we consider the plus and minus Mordell--Weil groups (the proof for the fine Mordell--Weil group is the same). By definitions, we have an injection.
\[
\frac{H^1(\QQ_{n,p},E[p^k])}{\cM_{p^k}^\pm(E/\QQ_{n,p})}\hookrightarrow\frac{H^1(\QQ_{n,p},\Ep)}{\cM^\pm(E/\QQ_{n,p})}.
\]
Therefore, the result follows once we apply the snake lemma to the following commutative diagram:
\[
\xymatrix{
0\ar[r]&\cM^\pm(E/\Qcyc)[p^k]\ar[r]&\left(E(\QQ_{(n)})\otimes\Qp/\Zp\right) [p^k]\ar[r]&\frac{H^1(\QQ_{n,p},\Ep)}{\cM^\pm(E/\QQ_{n,p})}\\
0\ar[r]&\cM_{p^k}^\pm(E/\QQ_{(n)})\ar[u]\ar[r]&E(\QQ_{(n)})/p^k\ar[u]^{\cong}\ar[r]&\frac{H^1(\QQ_{n,p},E[p^k])}{\cM_{p^k}^\pm(E/\QQ_{n,p})}\ar[u]\ar[r]&0.
}
\]
\end{proof}
On taking inverse limits as $k$ varies, we deduce:
\begin{corollary}\label{cor:tate}
We have the following isomorphisms of $\Lambda$-modules:
\begin{align*}
T_p\cM^\pm(E/\QQ_{(n)})&=\varprojlim_k \cM_{p^k}^\pm(E/\QQ_{(n)}),\\
T_p\cM(E/\QQ_{(n)})&=\varprojlim_k \cM_{p^k}(E/\QQ_{(n)}).
\end{align*}
\end{corollary}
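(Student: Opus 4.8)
The plan is to deduce both isomorphisms directly from Proposition~\ref{prop:control} by passing to the inverse limit over $k$, the only genuine content being that the transition maps on the two sides match up. I would treat the plus/minus case in detail, as the argument for the fine Mordell--Weil group is word-for-word the same.

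First I would recall that, by definition, $T_pM=\varprojlim_k M[p^k]$ for a $\Zp$-module $M$, where the connecting map $M[p^{k+1}]\to M[p^k]$ is multiplication by $p$. Applying this to $M=\cM^\pm(E/\QQ_{(n)})$, the task is to identify $\varprojlim_k\cM^\pm(E/\QQ_{(n)})[p^k]$ with $\varprojlim_k\cM_{p^k}^\pm(E/\QQ_{(n)})$, where the latter system is taken with respect to the reduction maps $E(\QQ_{(n)})/p^{k+1}\to E(\QQ_{(n)})/p^k$ inherited from the defining inclusions $\cM_{p^k}^\pm(E/\QQ_{(n)})\subset E(\QQ_{(n)})/p^k$.

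The key step is to check that the family of isomorphisms $\cM_{p^k}^\pm(E/\QQ_{(n)})\xrightarrow{\sim}\cM^\pm(E/\QQ_{(n)})[p^k]$ supplied by Proposition~\ref{prop:control} is compatible with these transition maps, so that they assemble into an isomorphism of inverse systems. Concretely, that isomorphism sends $P\bmod p^k$ to $P\otimes p^{-k}$; since $p\cdot(P\otimes p^{-(k+1)})=P\otimes p^{-k}$, the reduction map $P\bmod p^{k+1}\mapsto P\bmod p^k$ on the source corresponds exactly to the multiplication-by-$p$ map on the target. Hence passing to the limit yields $\varprojlim_k\cM_{p^k}^\pm(E/\QQ_{(n)})\cong\varprojlim_k\cM^\pm(E/\QQ_{(n)})[p^k]=T_p\cM^\pm(E/\QQ_{(n)})$, and this is $\Lambda$-linear because each isomorphism in Proposition~\ref{prop:control} is.

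I do not anticipate a serious obstacle, since the statement is purely formal once Proposition~\ref{prop:control} is available; the one point requiring care is the bookkeeping of transition maps described above, namely verifying that the reduction maps on the $E(\QQ_{(n)})/p^k$ side really match the multiplication-by-$p$ maps defining the Tate module rather than the identity or some other map. Once this is confirmed, the identical computation applied to $\cM_{p^k}(E/\QQ_{(n)})$ and $\cM(E/\QQ_{(n)})[p^k]$ gives the second isomorphism.
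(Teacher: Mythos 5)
Your proposal is correct and matches the paper's approach: the paper simply states the corollary is obtained from Proposition~\ref{prop:control} "on taking inverse limits as $k$ varies," which is exactly your argument. Your explicit check that the reduction maps on the $E(\QQ_{(n)})/p^k$ side correspond under $P\bmod p^k\mapsto P\otimes p^{-k}$ to the multiplication-by-$p$ transition maps defining the Tate module is the one point of substance, and you have it right.
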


\section{Proof of Theorem~\ref{thmD}}
To simplify our exposition, we give the following definition:
\begin{defn}
We define $r_n^\pm$ to be the number of factors $\Lambda/\Phi_n$ appearing in the pseudo-isomorphism given in Theorem~\ref{thm:structure-pm}(a).
\end{defn}

In order to prove Theorem~\ref{thmD}, we study certain explicit relations between $r_n^\pm$ and $e_n$. We do so by studying the intersection and the sum of the plus and minus Mordell--Weil groups. We will also have to study two separate cases, namely $n=0$ and $n>0$. We begin with the following preliminary lemmas, which will allow us to study the intersection.
\begin{lemma}\label{lem:local-pm}
We have
\[
\cM_{p^k}^+(E/\QQ_{n,p})\bigcap\cM_{p^k}^-(E/\QQ_{n,p})= E(\Qp)/p^k.
\]
\end{lemma}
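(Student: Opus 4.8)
The plan is to push the intersection up to the cyclotomic level (where the plus and minus groups are honestly Kummer images of Kobayashi's subgroups), reduce matters to an integral statement along the tower, and then descend. First I would record that the three operations $(-)[p^k]$, $\cap$ and $(-)^{\Gamma_n}$ are mutually compatible inside the common ambient group $H^1(\Qcycp,\Ep)$. Indeed, by Definition~\ref{def:pm} we have $\cM_{p^k}^\pm(E/\QQ_{n,p})=\cM^\pm(E/\QQ_{n,p})[p^k]$ and $\cM^\pm(E/\QQ_{n,p})=\cM^\pm(E/\Qcycp)^{\Gamma_n}$, and since $\Gamma_n$-invariants and $p^k$-torsion each commute with intersection, one gets
\[
\cM_{p^k}^+(E/\QQ_{n,p})\cap\cM_{p^k}^-(E/\QQ_{n,p})=\Big(\big(\cM^+(E/\Qcycp)\cap\cM^-(E/\Qcycp)\big)^{\Gamma_n}\Big)[p^k].
\]
Thus it suffices to prove the cyclotomic-level identity $\cM^+(E/\Qcycp)\cap\cM^-(E/\Qcycp)=E(\Qp)\otimes\Qp/\Zp$, the right-hand side being viewed in $H^1(\Qcycp,\Ep)$ by inflation of the Kummer image at the base field. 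Granting this, the descent is immediate: the right-hand side already lies in the image of inflation from level $0$, so taking $\Gamma_n$-invariants changes nothing, and taking $p^k$-torsion returns $E(\Qp)/p^k$ because $E(\Qp)[p^\infty]=0$ (a consequence of \cite[Proposition~8.7]{kobayashi03}), exactly as in the first assertion of Proposition~\ref{prop:control}.

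For the cyclotomic identity I would use that over $\Qcycp$ the plus and minus Mordell--Weil groups are genuinely the Kummer images $\cM^\pm(E/\Qcycp)=E^\pm(\Qcycp)\otimes\Qp/\Zp$ of Kobayashi's subgroups, where $E^\pm(\Qcycp)=\bigcup_m E^\pm(\QQ_{m,p})$ (this is the ``$n=\infty$'' case of Remark~\ref{rk:pm}). The integral input is the equality $E^+(\QQ_{m,p})\cap E^-(\QQ_{m,p})=E(\Qp)$: if a point lies in both subgroups, then all of its partial traces satisfy Kobayashi's descent conditions at \emph{every} intermediate layer, and feeding in the top condition forces the point down one layer, so an induction on $m$ drives it into $E(\Qp)$; conversely, a base-field point manifestly satisfies every trace condition. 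Passing to the union gives $E^+(\Qcycp)\cap E^-(\Qcycp)=E(\Qp)$, and the inclusion $\supseteq$ in the cyclotomic identity follows at once, since $E(\Qp)\otimes\Qp/\Zp$ sits inside both $\cM^\pm(E/\Qcycp)$.

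The inclusion $\subseteq$ is the hard direction and is what I expect to be the main obstacle, because tensoring with $\Qp/\Zp$ does \emph{not} commute with intersection in general, so the integral equality $E^+\cap E^-=E(\Qp)$ does not by itself control the intersection of the Kummer images. To get around this I would tensor the short exact sequence
\[
0\to E^+(\Qcycp)\cap E^-(\Qcycp)\to E^+(\Qcycp)\oplus E^-(\Qcycp)\to E^+(\Qcycp)+E^-(\Qcycp)\to0
\]
with $\Qp/\Zp$. Since all the modules involved are submodules of the $p$-torsion-free group $E(\Qcycp)$, the relevant $\mathrm{Tor}_1$-terms vanish and the tensored sequence stays short exact, whereupon the kernel of the difference map on $(E^+\otimes\Qp/\Zp)\oplus(E^-\otimes\Qp/\Zp)$ is exactly the diagonal image of $(E^+\cap E^-)\otimes\Qp/\Zp=E(\Qp)\otimes\Qp/\Zp$; unwinding this yields precisely $(E^+\otimes\Qp/\Zp)\cap(E^-\otimes\Qp/\Zp)=E(\Qp)\otimes\Qp/\Zp$ inside $E(\Qcycp)\otimes\Qp/\Zp$, and hence inside $H^1(\Qcycp,\Ep)$ via the injective Kummer map. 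The one nontrivial input this requires is that $E^+(\Qcycp)$, $E^-(\Qcycp)$ and their sum are $p$-saturated in $E(\Qcycp)$ (equivalently, that the corresponding local quotients are $p$-torsion-free, so that the Kummer maps remain injective after $\otimes\Qp/\Zp$); establishing this from Kobayashi's explicit description of the local plus and minus structure is where I expect the genuine work to lie, and the rank-one $\Lambda$-module structure $\cM^\pm(E/\Qcycp)^\vee\cong\Lambda$ recorded in Remark~\ref{rk:pm} should keep the associated bookkeeping manageable.
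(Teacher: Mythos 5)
Your reduction to the cyclotomic level and the subsequent descent are exactly what the paper does: the paper also observes that intersection commutes with taking $\Gamma_n$-invariants (trivial action on $E(\Qp)\otimes\Qp/\Zp$) and with taking $p^k$-torsion (via Proposition~\ref{prop:control} and $E(\Qp)[p^\infty]=0$). The divergence is at the core identity $\cM^+(E/\Qcycp)\cap\cM^-(E/\Qcycp)=E(\Qp)\otimes\Qp/\Zp$: the paper does not prove this at all, but cites it directly as \cite[(8.22)]{kobayashi03}. So the one thing your argument actually needs to supply from scratch is precisely the statement that is available off the shelf.

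As written, your reproof of that identity has two genuine gaps, both of which you flag yourself. First, the integral equality $E^+(\QQ_{m,p})\cap E^-(\QQ_{m,p})=E(\Qp)$ is asserted via an induction (``feeding in the top condition forces the point down one layer'') that is not carried out; the trace conditions defining $E^\pm$ interact in a nontrivial way and this step is not a formality. Second, and more seriously, your passage from the integral intersection to the intersection of the Kummer images after $\otimes\Qp/\Zp$ requires that $E^+(\Qcycp)$, $E^-(\Qcycp)$ and $E^+(\Qcycp)+E^-(\Qcycp)$ be $p$-saturated in $E(\Qcycp)$, so that the maps into $E(\Qcycp)\otimes\Qp/\Zp$ remain injective and the kernel of your difference map can be identified with the honest intersection inside the ambient group. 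You explicitly defer this (``where I expect the genuine work to lie''), so the proposal is incomplete exactly at its load-bearing point. The Tor-vanishing step itself is fine ($E(\Qcycp)[p^\infty]=0$ by \cite[Proposition~8.7]{kobayashi03}), but the saturation claim is a separate assertion about the explicit local structure and must be proved, not assumed. Since both missing pieces are exactly the content (and proof) of Kobayashi's (8.22), the efficient fix is to replace your second and third paragraphs by that citation, after which your descent paragraph gives the lemma.
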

\begin{proof}
It follows from \cite[(8.22)]{kobayashi03} that 
\[
\cM^+(E/\Qcycp)\bigcap\cM^-(E/\Qcycp)=E(\Qp)\otimes\Qp/\Zp.
\]
Since $\Gamma_n$ acts trivially on the right-hand side, taking $\Gamma_n$-invariants gives
\[
\cM^+(E/\QQ_{n,p})\bigcap\cM^-(E/\QQ_{n,p})=E(\Qp)\otimes\Qp/\Zp.
\]
Thanks to Proposition~\ref{prop:control}, the result follows on taking $p^k$-torsions.
\end{proof}

\begin{lemma}\label{lem:global-pm}\hfill\begin{itemize}
 \item[(a)]We have $\cM_{p^k}^\pm(E/\QQ)=E(\QQ)/p^k$.
\item[(b)]For $n>0$, $V_p\left(\cM^+(E/\QQ_{(n)})\bigcap\cM^-(E/\QQ_{(n)})\right)[\Phi_n]=V_p\cM(E/\QQ_{(n)})[\Phi_n]$.
\end{itemize}

\end{lemma}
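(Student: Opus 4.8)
The plan is to handle the two parts separately: part (a) is essentially formal, while part (b) reduces to a numerical fact about the value of $\Phi_n$ at $0$.

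\emph{Part (a).} First I would record that at the bottom layer $n=0$ the trace conditions defining Kobayashi's subgroups $E^\pm(\Qp)$ are vacuous, so that $E^+(\Qp)=E^-(\Qp)=E(\Qp)$. By the third bullet of Remark~\ref{rk:pm} (which identifies $E^\pm(\QQ_{n,p})\otimes\Qp/\Zp$ with $\cM^\pm(E/\QQ_{n,p})$ precisely when $n=0$) this gives $\cM^\pm(E/\Qp)=E(\Qp)\otimes\Qp/\Zp$; the same identification can be read off from Lemma~\ref{lem:local-pm} with $n=0$. Consequently $\cM_{p^k}^\pm(E/\Qp)$ is the full local Kummer image of $E(\Qp)/p^k$ inside $H^1(\Qp,E[p^k])$, and the localisation $E(\QQ)/p^k\to E(\Qp)/p^k$ already lands in it. Hence the map $E(\QQ)/p^k\to H^1(\Qp,E[p^k])/\cM_{p^k}^\pm(E/\Qp)$ defining $\cM_{p^k}^\pm(E/\QQ)$ is identically zero, and its kernel is all of $E(\QQ)/p^k$.

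\emph{Part (b).} Write $A=\cM^+(E/\QQ_{(n)})\cap\cM^-(E/\QQ_{(n)})$ and $B=\cM(E/\QQ_{(n)})$, so that $B\subseteq A$ as subgroups of $E(\QQ_{(n)})\otimes\Qp/\Zp$. The $p$-primary identity from the proof of Lemma~\ref{lem:local-pm}, namely $\cM^+(E/\QQ_{n,p})\cap\cM^-(E/\QQ_{n,p})=E(\Qp)\otimes\Qp/\Zp$, shows that $A$ is cut out by the local condition $E(\Qp)\otimes\Qp/\Zp$, whereas $B$ is cut out by the zero local condition. Localising at $p$ therefore yields a short exact sequence $0\to B\to A\to C\to 0$ of $\Lambda$-modules, where $C$ is the image of $A$ and embeds $G_n$-equivariantly into $E(\Qp)\otimes\Qp/\Zp$, with $G_n=\Gal(\QQ_{n,p}/\Qp)$. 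Since $V_p$ is exact on cofinitely generated $\Zp$-modules (e.g.\ because $V_pM\cong\Hom(M^\vee,\Qp)$ and $\Qp$ is $\Zp$-injective), applying it gives an exact sequence $0\to V_pB\to V_pA\to V_pC\to 0$.

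The crux is to show $V_pC[\Phi_n]=0$ for $n>0$. The base field $\Qp$ is fixed by $G_n$, so $G_n$ acts trivially on $E(\Qp)\otimes\Qp/\Zp$, hence on $C$ and on $V_pC$; thus $X=\gamma-1$ annihilates $V_pC$. On $V_pC$ the operator $\Phi_n$ then acts by the scalar $\Phi_n(0)=p$ (the value at $1$ of the $p^n$-th cyclotomic polynomial), which is invertible on the $\Qp$-vector space $V_pC$, forcing $V_pC[\Phi_n]=0$. Applying the left-exact functor $[\Phi_n]=\Hom_\Lambda(\Lambda/\Phi_n,-)$ to the exact sequence then yields $V_pA[\Phi_n]\subseteq V_pB$, whence $V_pA[\Phi_n]=V_pB[\Phi_n]$, which is exactly the asserted equality. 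I expect the only delicate points to be the bookkeeping that pins down the intersection's local condition as $E(\Qp)\otimes\Qp/\Zp$ and the verification that $V_p$ is exact in this setting; by contrast the numerical input $\Phi_n(0)=p$ is the clean fact that makes the $\Phi_n$-torsion of $V_pC$ collapse.
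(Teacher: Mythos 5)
Your argument is correct, but part (b) takes a genuinely different route from the paper. The paper computes the two sides of (b) separately: it realises $V_p\bigl(\cM^+(E/\QQ_{(n)})\cap\cM^-(E/\QQ_{(n)})\bigr)$ as the kernel of $E(\QQ_{(n)})^\bullet\to E(\QQ_{n,p})^\bullet/E(\Qp)^\bullet$ and then reruns the dimension count from the proof of Corollary~\ref{cor:sn-en}, using the surjectivity statement of Lemma~\ref{lem:surj-Phi_n} to identify the $\Phi_n$-torsion as $(\Qp[X]/\Phi_n)^{\max(0,e_n-1)}$ on both sides. You instead compare the two groups directly: the quotient $C$ of the intersection by $\cM(E/\QQ_{(n)})$ embeds equivariantly into $E(\Qp)\otimes\Qp/\Zp$, on which $\Gamma$ acts trivially, so $\Phi_n$ acts on $V_pC$ as the scalar $\Phi_n(0)=p$, which is invertible on a $\Qp$-vector space; hence the $\Phi_n$-torsion of $V_pA$ and $V_pB$ coincide as subspaces. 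This is cleaner: it requires no case distinction on $e_n$, no appeal to Lemma~\ref{lem:surj-Phi_n}, and it yields equality of subspaces rather than merely matching dimensions. Your justification of the exactness of $V_p$ via $V_pM\cong\Hom(M^\vee,\Qp)$ and the injectivity of $\Qp$ over $\Zp$ is also sound (and in fact only left-exactness is needed).

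The one point to tighten is in part (a). The real content there is the identity $\cM^\pm(E/\Qp)=E(\Qp)\otimes\Qp/\Zp$, and you justify it by citing the third bullet of Remark~\ref{rk:pm}, which is an unproved aside whose $n=0$ case is essentially what needs to be established; your fallback citation of Lemma~\ref{lem:local-pm} at $n=0$ only yields the inclusion $E(\Qp)/p^k\subseteq\cM^\pm_{p^k}(E/\Qp)$, not the reverse one. The paper closes this by a corank count: $\cM^\pm(E/\Qcycp)^\vee$ is free of rank one over $\Lambda$ (first bullet of Remark~\ref{rk:pm}), so $\cM^\pm(E/\Qp)\cong\Qp/\Zp$, and since it contains the divisible group $E(\Qp)\otimes\Qp/\Zp\cong\Qp/\Zp$, the two must be equal. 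With that supplement, the rest of your part (a) --- the localisation of $E(\QQ)/p^k$ lands in $\cM^\pm_{p^k}(E/\Qp)$, so the map defining $\cM^\pm_{p^k}(E/\QQ)$ is identically zero --- matches the paper's conclusion.
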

\begin{proof}
To prove the assertion in part (a), it suffices to show that
\begin{equation}\label{eq:claimed}
   \cM_{p^k}^\pm(E/\Qp)=E(\Qp)/p^k, 
\end{equation}
since it would imply that
\[
\cM_{p^k}^\pm(E/\QQ)=\ker\left(E(\QQ)/p^k\rightarrow\frac{H^1(\Qp,E[p^k])}{E(\Qp)/p^k}\right),
\]
which is clearly $E(\QQ)/p^k$.

Indeed, it follows immediately from the definition of $E^\pm(\Qcycp)$ that $E(\Qp)\otimes\Qp/\Zp\subset (E(\Qp)\otimes\Qp/\Zp)^{\Gamma}= \cM^\pm(E/\Qp)$. Since $E(\Qp)$ is a torsion-free rank-one $\Zp$-module, we have $E(\Qp)\otimes\Qp/\Zp\cong\Qp/\Zp$. The freeness of $\cM^\pm(E/\Qcycp)^\vee$ as discussed in Remark~\ref{rk:pm} tells us that $\cM^\pm(E/\Qp)\cong\Qp/\Zp$. In particular, $\cM(E/\Qp)=E(\Qp)\otimes\Qp/\Zp$. This implies \eqref{eq:claimed} after taking $p^k$-torsions and applying Proposition~\ref{prop:control}.

We now prove part (b) and let $n>0$. The description of the intersection of the local conditions given in Lemma~\ref{lem:local-pm} gives the following exact sequence:
\[
0\rightarrow V_p\left(\cM^+(E/\QQ_{(n)})\bigcap \cM^-(E/\QQ_{(n)})\right)\rightarrow E(\QQ_{(n)})^\bullet\rightarrow \frac{E(\QQ_{n,p})^\bullet}{E(\Qp)^\bullet}.
\]
Taking the kernels of $\Phi_n$ gives
\[
0\rightarrow V_p\left(\cM^+(E/\QQ_{(n)})\bigcap \cM^-(E/\QQ_{(n)})\right)[\Phi_n]\rightarrow E(\QQ_{(n)})^\bullet[\Phi_n]\rightarrow E(\QQ_{n,p})^\bullet[\Phi_n].
\]
The proof of Corollary~\ref{cor:sn-en} then tells us that 
\[
V_p\left(\cM^+(E/\QQ_{(n)})\bigcap \cM^-(E/\QQ_{(n)})\right)[\Phi_n]=(\Qp[X]/\Phi_n)^{\max(e_n-1,0)}
\]
 Hence, part (b) of the lemma now follows from Corollary~\ref{cor:sn-en}.
\end{proof}

\begin{corollary}\label{cor:rn-pm}\hfill\begin{itemize}
   \item[(a)]If $n=0$, then $r_n^+(E)=r_n^-(E)=e_n$.
\item[(b)]If $n>0$, then  $s_n=\max(0,e_n-1)\le \min(r_n^+,r_n^-)$.
\end{itemize}
\end{corollary}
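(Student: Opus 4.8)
The plan is to treat the two cases separately, computing the relevant Tate modules in two different ways and then comparing $\Zp$-ranks (for $n=0$) or dimensions over a suitable field (for $n>0$).

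For part (a) I would identify $T_p\cM^\pm(E/\QQ)$ on the nose. By Corollary~\ref{cor:tate} together with Lemma~\ref{lem:global-pm}(a),
\[
T_p\cM^\pm(E/\QQ)=\varprojlim_k\cM_{p^k}^\pm(E/\QQ)=\varprojlim_k E(\QQ)/p^k=E(\QQ)^\star,
\]
the $p$-adic completion of $E(\QQ)$, whose $\Zp$-rank equals $\rank E(\QQ)=e_0$. On the other hand, Remark~\ref{rk:Tp-pm} gives $T_p\cM^\pm(E/\QQ)=\bigoplus_{c_i^\pm\le 0}\Lambda/\Phi_{c_i^\pm}$; since $\Phi_0=X$ and each $c_i^\pm\ge 0$, only the factors with $c_i^\pm=0$ occur, so this module is $(\Lambda/X)^{r_0^\pm}\cong\Zp^{r_0^\pm}$, of $\Zp$-rank $r_0^\pm$. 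Comparing the two $\Zp$-ranks yields $r_0^+=r_0^-=e_0$.

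For part (b) I would exploit the intersection $\cM^+(E/\QQ_{(n)})\cap\cM^-(E/\QQ_{(n)})$, which is exactly where Lemma~\ref{lem:global-pm}(b) delivers the fine object, giving
\[
V_p\bigl(\cM^+(E/\QQ_{(n)})\cap\cM^-(E/\QQ_{(n)})\bigr)[\Phi_n]=V_p\cM(E/\QQ_{(n)})[\Phi_n].
\]
Tensoring Corollary~\ref{cor:Tate-fine} with $\Qp$ gives $V_p\cM(E/\QQ_{(n)})\cong\bigoplus_{m=0}^n(\Qp[X]/\Phi_m)^{s_m}$; since $\Phi_m$ and $\Phi_n$ are coprime in $\Qp[X]$ for $m\neq n$, only the $m=n$ summand carries $\Phi_n$-torsion, so the common value above is $(\Qp[X]/\Phi_n)^{s_n}$. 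In the same way Remark~\ref{rk:Tp-pm} yields $V_p\cM^\pm(E/\QQ_{(n)})[\Phi_n]=(\Qp[X]/\Phi_n)^{r_n^\pm}$.

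Finally I would run the inclusions $\cM^+\cap\cM^-\subseteq\cM^\pm$ through these computations: as $T_p$ is left exact (an inverse limit of kernels), $-\otimes\Qp$ is exact, and taking $\Phi_n$-torsion is left exact, the functor $V_p(-)[\Phi_n]$ preserves injections, producing embeddings $(\Qp[X]/\Phi_n)^{s_n}\hookrightarrow(\Qp[X]/\Phi_n)^{r_n^\pm}$. Because $\Phi_n$ is irreducible over $\Qp$, the ring $\Qp[X]/\Phi_n$ is a field, so these are injections of finite-dimensional vector spaces and a dimension count forces $s_n\le r_n^\pm$, hence $s_n\le\min(r_n^+,r_n^-)$; combined with $s_n=\max(0,e_n-1)$ from Corollary~\ref{cor:sn-en}, this is the assertion. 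The genuine content of part (b) is already packaged in Lemma~\ref{lem:global-pm}(b) (that intersecting the plus and minus local conditions recovers the fine condition $E(\Qp)$, via Lemma~\ref{lem:local-pm}); granting that, the only point requiring care is the left exactness of $V_p(-)[\Phi_n]$ together with the observation that $\Qp[X]/\Phi_n$ is a field, which is what licenses passing from an inclusion of modules to the numerical inequality.
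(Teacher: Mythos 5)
Your proposal is correct and follows essentially the same route as the paper: part (a) by computing $V_p\cM^\pm(E/\QQ)$ via Corollary~\ref{cor:tate} and Lemma~\ref{lem:global-pm}(a) and comparing with Remark~\ref{rk:Tp-pm}, and part (b) by using Lemma~\ref{lem:global-pm}(b) to realize $V_p\cM(E/\QQ_{(n)})[\Phi_n]$ inside $V_p\cM^\pm(E/\QQ_{(n)})[\Phi_n]$ and then counting via Corollaries~\ref{cor:sn-en} and~\ref{cor:Tate-fine}. You merely spell out the exactness properties of $V_p(-)[\Phi_n]$ that the paper leaves implicit.
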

\begin{proof}
By Remark~\ref{rk:Tp-pm},
\[
V_p\cM^\pm(E/\QQ_{(n)})=\bigoplus_{m=0}^n(\Qp[X]/\Phi_m)^{r_m^\pm}.
\]
If $n=0$, we have $$V_p\cM^\pm(E/\QQ)=\Qp^{e_0}$$ on combining Corollary~\ref{cor:tate} and  Lemma~\ref{lem:global-pm}(a). Therefore, part (a) of the corollary follows.

For $n>0$,  we have 
\[
V_p\cM(E/\QQ_{(n)})[\Phi_n]\subset V_p\cM^\pm(E/\QQ_{(n)})[\Phi_n]
\]
by Lemma~\ref{lem:global-pm}(b). Therefore, part (b) of the corollary  follows from  Corollaries~\ref{cor:sn-en} and \ref{cor:Tate-fine}.
\end{proof}

We now turn our attention to the sum of plus and minus Mordell--Weil groups.

\begin{lemma}\label{lem:sum-pm}
The quotient
\[
\frac{E(\QQ_{(n)})/p^k}{\cM_{p^k}^+(E/\QQ_{(n)})+\cM_{p^k}^-(E/\QQ_{(n)})}\]
is finite, of order bounded independently of $k$.
\end{lemma}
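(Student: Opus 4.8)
The plan is to reduce the assertion to a single corank computation, handling the dependence on $k$ by the snake lemma. Write $A=E(\QQ_{(n)})\otimes\Qp/\Zp$ and $B^\pm=\cM^\pm(E/\QQ_{(n)})$, viewed as subgroups of $A$. By Proposition~\ref{prop:control} the displayed quotient is isomorphic to $A[p^k]/\bigl(B^+[p^k]+B^-[p^k]\bigr)$, and we have the nested inclusions $B^+[p^k]+B^-[p^k]\subseteq(B^++B^-)[p^k]\subseteq A[p^k]$. I would bound the two successive quotients separately. Applying the snake lemma to multiplication by $p^k$ on $0\to B^++B^-\to A\to A/(B^++B^-)\to0$ and using that $A$ is divisible yields an injection $A[p^k]/(B^++B^-)[p^k]\hookrightarrow\bigl(A/(B^++B^-)\bigr)[p^k]$; the same device applied to $0\to B^+\cap B^-\to B^+\oplus B^-\to B^++B^-\to0$ gives an injection $(B^++B^-)[p^k]/\bigl(B^+[p^k]+B^-[p^k]\bigr)\hookrightarrow(B^+\cap B^-)/p^k$. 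Since $B^+\cap B^-$ is a subgroup of the cofinitely generated group $A$, it is itself cofinitely generated, so $(B^+\cap B^-)/p^k$ is bounded by the order of its finite part, independently of $k$. Thus everything comes down to proving that $A/(B^++B^-)$ is finite.

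This finiteness is the crux, and I would establish it by passing to $\Qp$-coefficients and showing $V_pB^++V_pB^-=V_pA=E(\QQ_{(n)})^\bullet$. The tools are the isotypic decomposition
\[
E(\QQ_{(n)})^\bullet\cong\bigoplus_{m=0}^n(\Qp[X]/\Phi_m)^{e_m}
\]
and the $\Lambda$-linear localization map $f_n\colon E(\QQ_{(n)})^\bullet\to E(\QQ_{n,p})^\bullet\cong\Qp[X]/\omega_n$ introduced in Lemma~\ref{lem:surj-Phi_n}. Because $f_n$ is $\Lambda$-linear it carries the $\Phi_m$-isotypic part of the source into $E(\QQ_{n,p})^\bullet[\Phi_m]$, which is exactly the $m$-th summand $\Qp[X]/\Phi_m$ of $E(\QQ_{n,p})^\bullet$. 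The decisive input is Kobayashi's determination of the plus/minus norm groups (\cite[\S8]{kobayashi03}): $E^+(\QQ_{n,p})^\bullet$ is the sum of the components $\Qp[X]/\Phi_m$ with $m$ even and $E^-(\QQ_{n,p})^\bullet$ the sum of those with $m=0$ or $m$ odd, the two meeting exactly in the $\Phi_0$-line $E(\Qp)^\bullet$, in accordance with Lemma~\ref{lem:local-pm}. Since $E^\pm(\QQ_{n,p})\otimes\Qp/\Zp\subseteq\cM^\pm(E/\QQ_{n,p})$, the preimage under $f_n$ of $E^\pm(\QQ_{n,p})^\bullet$ is contained in $V_pB^\pm$.

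With these in hand the computation is immediate: decomposing an arbitrary $w\in E(\QQ_{(n)})^\bullet$ into isotypic parts $w=\sum_m w_m$, each $f_n(w_m)$ lies in the $m$-th summand $\Qp[X]/\Phi_m$, which sits inside $E^+(\QQ_{n,p})^\bullet$ if $m$ is even and inside $E^-(\QQ_{n,p})^\bullet$ if $m$ is odd (and inside both if $m=0$). Hence every $w_m$ lies in $V_pB^+$ or in $V_pB^-$, so $w\in V_pB^++V_pB^-$; this forces $V_p(B^++B^-)=V_pA$, i.e.\ $A/(B^++B^-)$ has corank zero, and being a quotient of the cofinitely generated $A$ it is finite. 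Assembling the two snake-lemma bounds then gives
\[
\bigl|A[p^k]/(B^+[p^k]+B^-[p^k])\bigr|\le\bigl|A/(B^++B^-)\bigr|\cdot\bigl|(B^+\cap B^-)/p^k\bigr|,
\]
a bound independent of $k$.

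I expect the only genuinely delicate point to be the matching of the global isotypic components with the local plus and minus conditions. Knowing merely that $\cM^+(E/\QQ_{n,p})$ and $\cM^-(E/\QQ_{n,p})$ together span $E(\QQ_{n,p})^\bullet$ would control $B^++B^-$ only up to a modular-law defect that need not be finite; what rescues the argument is the transversality encoded in Kobayashi's even/odd description, which ensures that each individual $\Phi_m$-component is captured entirely by one of the two conditions rather than only by their sum. Once this is in place, the remainder is routine homological algebra.
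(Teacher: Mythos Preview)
Your proof is correct, but the paper argues much more directly. Rather than reducing to a corank computation via the snake lemma, the paper writes down a B\'ezout identity $A\tilde\omega_n^-+B\omega_n^+=p^m$ in $\Zp[X]$ (where $\tilde\omega_n^-$ is the product of the $\Phi_j$ with $j$ odd and $\omega_n^+$ the product with $j$ even, including $j=0$); then for any $Q\in E(\QQ_{(n)})/p^k$ one has $p^mQ=A\tilde\omega_n^-Q+B\omega_n^+Q$, and since $\tilde\omega_n^-$ annihilates the odd local isotypic components (resp.\ $\omega_n^+$ the even ones), the two summands localize into $E^+(\QQ_{n,p})/p^k$ and $E^-(\QQ_{n,p})/p^k$ respectively, hence lie in $\cM_{p^k}^+$ and $\cM_{p^k}^-$. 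This yields the explicit bound $p^{m\cdot\rank E(\QQ_{(n)})}$ in one stroke. Your argument and the paper's rest on the same idea---the even/odd distribution of the $\Phi_m$-components between Kobayashi's plus and minus groups---but you exploit it only after passing to $\Qp$-coefficients and then climb back to integral statements through two layers of snake-lemma bookkeeping, whereas the B\'ezout relation is precisely the integral incarnation of your rational isotypic splitting and makes the homological scaffolding unnecessary. Your route is more structural and would adapt well if one only knew the local conditions abstractly; the paper's is shorter and gives an effective bound.
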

\begin{proof}We mimic  the proof of \cite[Proposition~10.1]{kobayashi03}.  Let 
\[
\tilde\omega_n^-=\prod_{\substack{1\le m\le n\\ m \text{ odd}}}\Phi_m,\quad
    \omega_n^+=\prod_{\substack{0\le m\le n\\ m \text{ even}}}\Phi_m.
\]
Since these polynomials are coprime,  there exist $A,B\in\Zp[X]$ and an integer $m\ge0$ such that
\[
A\tilde\omega^-_n+B\omega_n^+=p^m.
\]
If $Q\in E(\QQ_{(n)})/p^k$, we may write $$p^mQ=P^++P^-,$$
where $P^+=A\tilde\omega^-_nQ$ and $P^-=B\tilde\omega^+_nQ$. Then, 
\[
\loc_pP^\pm\in E^\pm(E/\QQ_{n,p})/p^k\subset \cM_{p^k}^\pm(E/\QQ_{n,p}),
\]
where $\loc_p$ denotes the localization map at the unique place of $\QQ_{(n)}$ lying above $p$.
Therefore, $P^\pm\in\cM^\pm_{p^k}(E/\QQ_{(n)})$. In particular,  the quotient discussed in the statement of  the lemma is bounded by $p^m\times\rank E(\QQ_{(n)})$.
\end{proof}

\begin{remark}
We have crucially made use of the hypothesis that $a_p(E)=0$ in the proof of Lemma~\ref{lem:sum-pm} above. When $a_p(E)\ne0$, we may define sharp/flat Mordell--Weil groups using Sprung's local conditions studied in \cite{sprung09}. However, in the absence of an explicit description of these local conditions, it is not clear to us how to prove an analogue of Lemma~\ref{lem:sum-pm} in this setting.
\end{remark}

\begin{corollary}\label{cor:sum}
If $n>0$, then 
\[ 
r^+_n+r_n^-=e_n+s_n.
\]
\end{corollary}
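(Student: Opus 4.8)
The plan is to reduce the claimed identity to the elementary dimension formula for subspaces of a vector space. All the objects involved are $\Lambda$-submodules of $E(\QQ_{(n)})^\bullet\cong\bigoplus_{m=0}^n(\Qp[X]/\Phi_m)^{e_m}$, and since $\Phi_n$ is irreducible over $\Qp$, the quotient $K_n:=\Qp[X]/\Phi_n$ is a field. Taking $\Phi_n$-torsion of this decomposition and of the analogous decompositions of $V_p\cM^\pm(E/\QQ_{(n)})$ (Remark~\ref{rk:Tp-pm}) and of $V_p\cM(E/\QQ_{(n)})$ (Corollary~\ref{cor:Tate-fine}) produces $K_n$-vector spaces
\[
A:=V_p\cM^+(E/\QQ_{(n)})[\Phi_n],\quad B:=V_p\cM^-(E/\QQ_{(n)})[\Phi_n],\quad W:=E(\QQ_{(n)})^\bullet[\Phi_n]
\]
of dimensions $r_n^+$, $r_n^-$ and $e_n$ respectively, with $A,B\subseteq W$. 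The goal is then to identify $A\cap B$ and $A+B$ and apply $\dim_{K_n}(A+B)+\dim_{K_n}(A\cap B)=\dim_{K_n}A+\dim_{K_n}B$.

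The intersection term is supplied by Lemma~\ref{lem:global-pm}(b), which reads $V_p(\cM^+(E/\QQ_{(n)})\cap\cM^-(E/\QQ_{(n)}))[\Phi_n]=V_p\cM(E/\QQ_{(n)})[\Phi_n]$. Since $T_p=\varprojlim_k(-)[p^k]$ and the exact functor $-\otimes_{\Zp}\Qp$ both commute with finite intersections of submodules, one has $V_p(\cM^+\cap\cM^-)=V_p\cM^+\cap V_p\cM^-$; taking $\Phi_n$-torsion gives $A\cap B=V_p\cM(E/\QQ_{(n)})[\Phi_n]$, whose $K_n$-dimension is $s_n$ by Corollary~\ref{cor:Tate-fine} and the definition of $s_n$. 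Hence $\dim_{K_n}(A\cap B)=s_n$.

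The substantive step is to prove $A+B=W$, i.e.\ $\dim_{K_n}(A+B)=e_n$, and for this I would feed Lemma~\ref{lem:sum-pm} through $V_p$. Put $D=E(\QQ_{(n)})\otimes\Qp/\Zp$ and $S=\cM^+(E/\QQ_{(n)})+\cM^-(E/\QQ_{(n)})$. Because tensoring and filtered colimits are exact, $D=\varinjlim_kE(\QQ_{(n)})/p^k$ and $S=\varinjlim_k\bigl(\cM_{p^k}^+(E/\QQ_{(n)})+\cM_{p^k}^-(E/\QQ_{(n)})\bigr)$, so $D/S=\varinjlim_kQ_k$, where $Q_k$ is the quotient of Lemma~\ref{lem:sum-pm}; as the orders $|Q_k|$ are bounded independently of $k$, this directed colimit is finite. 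Now $V_p$ is exact on cofinitely generated $\Zp$-modules (for such $M$, $\dim_{\Qp}V_pM=\corank_{\Zp}M$, which is additive on short exact sequences), so applying it to $0\to S\to D\to D/S\to0$ gives $V_pS=V_pD=E(\QQ_{(n)})^\bullet$, while applying it to the surjection $\cM^+\oplus\cM^-\twoheadrightarrow S$ gives $V_pS=V_p\cM^++V_p\cM^-$. Therefore $V_p\cM^++V_p\cM^-=E(\QQ_{(n)})^\bullet$. Finally, $E(\QQ_{(n)})^\bullet$ is a semisimple $\Lambda$-module whose isotypic components are the summands $(\Qp[X]/\Phi_m)^{e_m}$, so for submodules $U_1,U_2$ one has $(U_1+U_2)[\Phi_n]=U_1[\Phi_n]+U_2[\Phi_n]$, the $\Phi_n$-torsion of a submodule coinciding with its image under the $\Lambda$-linear projection onto the $K_n$-isotypic component; hence $A+B=(V_p\cM^++V_p\cM^-)[\Phi_n]=W$ and $\dim_{K_n}(A+B)=e_n$.

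Combining the two computations, $r_n^++r_n^-=\dim_{K_n}A+\dim_{K_n}B=\dim_{K_n}(A+B)+\dim_{K_n}(A\cap B)=e_n+s_n$, which is the assertion. I expect the main obstacle to lie in the third paragraph, namely transporting the mod-$p^k$ statement of Lemma~\ref{lem:sum-pm} to the level of $V_p$: one must check that a directed colimit of finite groups of uniformly bounded order is finite (so that $D/S$ vanishes under $V_p$) and that $V_p$ is genuinely exact on cofinitely generated modules (so that both $V_pS=V_pD$ and $V_pS=V_p\cM^++V_p\cM^-$ hold). The accompanying semisimplicity remark, which lets $\Phi_n$-torsion commute with the sum, is routine but genuinely needed, since $[\Phi_n]$ does not commute with sums of submodules in general.
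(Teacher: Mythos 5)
Your proposal is correct and follows essentially the same route as the paper: the paper's proof writes down the exact sequence $0\to \cM_{p^k}^+\cap\cM_{p^k}^-\to\cM_{p^k}^+\oplus\cM_{p^k}^-\to E(\QQ_{(n)})/p^k$, passes to $V_p$ using Lemma~\ref{lem:sum-pm} for surjectivity and Lemma~\ref{lem:global-pm}(b) for the kernel, then takes $\Phi_n$-torsion and counts --- which is exactly your $\dim(A+B)+\dim(A\cap B)=\dim A+\dim B$ argument in a different notation. Your extra care about $V_p$ commuting with intersections and about $[\Phi_n]$ commuting with sums via semisimplicity fills in steps the paper leaves implicit, but introduces no new idea.
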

\begin{proof}
Consider the exact sequence
\begin{align*}
    0\rightarrow  \cM_{p^k}^+(E/\QQ_{(n)})\bigcap \cM_{p^k}^-(E/\QQ_{(n)})&\rightarrow \cM_{p^k}^+(E/\QQ_{(n)})\oplus  \cM_{p^k}^-(E/\QQ_{(n)})\\
    &\rightarrow E(\QQ_{(n)})/p^k,
\end{align*}
where the first map is given by the diagonal embedding, whereas the second map is defined by $P\oplus Q\mapsto P-Q$. On taking  inverse limits  and tensoring by $\Qp$, we obtain the short exact sequence
\begin{align*}
    0\rightarrow  V_p\left(\cM^+(E/\QQ_{(n)})\bigcap \cM^-(E/\QQ_{(n)})\right)&\rightarrow V_p \cM^+(E/\QQ_{(n)})\oplus V_p \cM^-(E/\QQ_{(n)})\\
    &\rightarrow E(\QQ_{(n)})^\bullet\rightarrow 0,
\end{align*}
where the surjectivity of the last map follows from Lemma~\ref{lem:sum-pm}.
On combining this short exact sequence with Corollary~\ref{cor:Tate-fine}, Remark~\ref{rk:Tp-pm} and Lemma~\ref{lem:global-pm}(b), we deduce the following short exact sequence:
\[
0\rightarrow (\Qp[X]/\Phi_n)^{s_n}\rightarrow(\Qp[X]/\Phi_n)^{r_n^+}\oplus (\Qp[X]/\Phi_n)^{r_n^-}\rightarrow (\Qp[X]/\Phi_n)^{e_n}\rightarrow0,
\]
from which the corollary follows.
\end{proof}
We can now prove the following key proposition, which will allow us to calculate the left-hand side of the equation in the statement of Theorem~\ref{thmD}:
\begin{proposition}\label{prop:rn-pm}
Let $n>0$. We have the equalities
\[
\min(r_n^+,r_n^-)=s_n=\max(0,e_n-1).
\]
\end{proposition}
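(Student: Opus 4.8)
The plan is to observe that all of the genuine algebraic content needed for this statement has already been packaged into the three preceding corollaries, so that the proposition reduces to a short arithmetic comparison. Recall from Corollary~\ref{cor:sn-en} that $s_n=\max(0,e_n-1)$, which is precisely the right-hand equality; it therefore remains only to prove $\min(r_n^+,r_n^-)=s_n$. For this I would combine the lower bound of Corollary~\ref{cor:rn-pm}(b), namely $s_n\le\min(r_n^+,r_n^-)$, with the sum relation of Corollary~\ref{cor:sum}, namely $r_n^++r_n^-=e_n+s_n$, both of which are available for $n>0$.

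The matching upper bound is then forced as follows. Since $s_n=\max(0,e_n-1)\ge e_n-1$, we always have $e_n\le s_n+1$, and hence
\[
r_n^++r_n^-=e_n+s_n\le 2s_n+1.
\]
If we had $\min(r_n^+,r_n^-)\ge s_n+1$, then both $r_n^+$ and $r_n^-$ would be at least $s_n+1$, giving $r_n^++r_n^-\ge 2s_n+2$, contradicting the displayed inequality. Hence $\min(r_n^+,r_n^-)\le s_n$, and together with the lower bound of Corollary~\ref{cor:rn-pm}(b) this yields the desired equality.

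I do not expect any real obstacle at this final step: the substantive work lies upstream, in establishing the sum relation (Corollary~\ref{cor:sum}, which rests on Lemma~\ref{lem:sum-pm} and crucially on the hypothesis $a_p(E)=0$) and in the local intersection computation feeding Corollary~\ref{cor:rn-pm}(b). The argument presented here is purely combinatorial. The one point worth flagging is that it uses the restriction $n>0$ essentially, since the two inputs it invokes are proved only in that range; the excluded case $n=0$ is governed instead by Corollary~\ref{cor:rn-pm}(a), where one has $r_0^+=r_0^-=e_0$ rather than the relation above.
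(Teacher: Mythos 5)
Your proposal is correct and follows essentially the same route as the paper: both rest on exactly the two inputs $s_n\le\min(r_n^+,r_n^-)$ (Corollary~\ref{cor:rn-pm}(b)) and $r_n^++r_n^-=e_n+s_n$ (Corollary~\ref{cor:sum}), finishing with elementary arithmetic. The only cosmetic difference is that the paper treats $e_n=0$ separately and uses a parity/WLOG argument when $e_n>0$, whereas your pigeonhole inequality handles both cases uniformly.
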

\begin{proof}
If $e_n=0$, it is clear that $r_n^+=r_n^-=0$  since $\cM^\pm(E/\QQ_{(n)})\subset E(\QQ_{(n)})\otimes\Qp/\Zp$. 

Suppose that $e_n>0$. It follows from Corollaries~\ref{cor:sum} and \ref{cor:sn-en} that
\[
r_n^++r_n^-=2e_n-1.
\]
In particular, $r_n^+\ne r_n^-$ (otherwise, the left-hand side would be an even integer, whereas the right-hand side is always odd). Without loss of generality, assume that $r_n^-+1\le r_n^+$. Then
$$
2r_n^-+1\le 2e_n-1,
$$
or equivalently,
$$
r_n^-\le e_n-1,
$$
But Corollary~\ref{cor:rn-pm}(b) says that $$ e_n-1=s_n\le r_n^-.$$
Therefore, we must have equalities throughout, meaning that  $e_n-1=s_n=r_n^-$ and $r_n^-+1=r_n^+$ as required.
\end{proof}
We can now prove Theorem~\ref{thmD}:

\begin{theorem}\label{thm:Char-pm}
Suppose that $\zhe^\pm(E/\QQ_{(n)})$ are finite for all $n\ge0$. Then
\[
\gcd\left(\Char_\Lambda\cM^+(E/\Qcyc)^\vee,\Char_\Lambda\cM^-(E/\Qcyc)^\vee\right)=\left(X^{e_0}\prod_{n>0,e_n>0}\Phi_n^{e_n-1}\right).
\]
\end{theorem}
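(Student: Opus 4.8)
The plan is to reduce the statement to the factor-by-factor determination of $r_n^\pm$ already carried out in Corollary~\ref{cor:rn-pm} and Proposition~\ref{prop:rn-pm}, so that the theorem becomes an exercise in assembling these inputs. First I would record that, since the characteristic ideal of a torsion $\Lambda$-module is invariant under pseudo-isomorphism, the decomposition in Theorem~\ref{thm:structure-pm}(a) together with the definition of $r_n^\pm$ yields
\[
\Char_\Lambda\cM^\pm(E/\Qcyc)^\vee=\left(\prod_{n\ge0}\Phi_n^{r_n^\pm}\right).
\]
Here each simple summand $\Lambda/\Phi_{c_i^\pm}$ contributes the factor $\Phi_{c_i^\pm}$ to the characteristic ideal, and $r_n^\pm$ is by definition the multiplicity with which $\Lambda/\Phi_n$ occurs; torsionness of $\cM^\pm(E/\Qcyc)^\vee$ is guaranteed by the same pseudo-isomorphism (cf. Remark~\ref{rk:Kato-Roh}).

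Next I would pass from the two principal ideals to their greatest common divisor by computing it factorwise. The polynomials $\Phi_n$ (with the convention $\Phi_0=X$) are pairwise coprime distinguished polynomials in $\Lambda$, being distinct irreducibles. Consequently no cancellation occurs between distinct $\Phi_m$ and $\Phi_n$, and the gcd of the two ideals is read off exponent by exponent:
\[
\gcd\left(\Char_\Lambda\cM^+(E/\Qcyc)^\vee,\Char_\Lambda\cM^-(E/\Qcyc)^\vee\right)=\left(\prod_{n\ge0}\Phi_n^{\min(r_n^+,r_n^-)}\right).
\]

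It then remains to evaluate $\min(r_n^+,r_n^-)$ for each $n$, which is precisely what the earlier results supply. For $n=0$, Corollary~\ref{cor:rn-pm}(a) gives $r_0^+=r_0^-=e_0$, so the $n=0$ factor is $\Phi_0^{e_0}=X^{e_0}$. For $n>0$, Proposition~\ref{prop:rn-pm} gives $\min(r_n^+,r_n^-)=\max(0,e_n-1)$, so the $n$-th factor equals $\Phi_n^{e_n-1}$ exactly when $e_n>0$ and is trivial otherwise. Substituting these contributions into the displayed product yields $X^{e_0}\prod_{n>0,\,e_n>0}\Phi_n^{e_n-1}$, which is the asserted identity.

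Since Proposition~\ref{prop:rn-pm} and Corollary~\ref{cor:rn-pm} already encapsulate the genuinely substantive input---in particular the parity argument forcing $\{r_n^+,r_n^-\}=\{e_n-1,\,e_n\}$ when $e_n>0$---the work remaining in this theorem is essentially bookkeeping. The one point I would treat with care is the justification that the gcd may be taken factorwise, which relies on the pairwise coprimality of the $\Phi_n$ and ensures that the minimum of the exponents, rather than any interaction between distinct cyclotomic factors, governs the answer.
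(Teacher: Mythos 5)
Your proposal is correct and follows essentially the same route as the paper: express each characteristic ideal as $\bigl(\prod_{n\ge0}\Phi_n^{r_n^\pm}\bigr)$ via Theorem~\ref{thm:structure-pm}(a), take the gcd exponentwise using the coprimality of the $\Phi_n$, and then substitute $r_0^\pm=e_0$ from Corollary~\ref{cor:rn-pm}(a) and $\min(r_n^+,r_n^-)=\max(0,e_n-1)$ from Proposition~\ref{prop:rn-pm}. The only difference is that you make explicit the (routine) justification that the gcd may be computed factor by factor, which the paper leaves implicit.
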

\begin{proof}
By definition,
\[
\Char_\Lambda\cM^\pm(E/\Qcyc)^\vee=\left(\prod_{n\ge0}\Phi_n^{r_n^\pm}\right).
\]
In particular,
\[
\gcd\left(\Char_\Lambda\cM^+(E/\Qcyc)^\vee,\Char_\Lambda\cM^-(E/\Qcyc)^\vee\right)=\left(\prod_{n\ge0}\Phi_n^{\min(r_n^+,r_n^-)}\right).
\]
Corollary~\ref{cor:rn-pm} says that $r_n^\pm=e_n$ when $n=0$, whereas Proposition~\ref{prop:rn-pm} tells us that $\min(r_n^+,r_n^-)=s_n=\max(0,e_n-1)$ for $n>0$. Therefore, the theorem follows.
\end{proof}
\begin{remark}\label{rk:KP}
Suppose that $\zhe^\pm(E/\Qcyc)$ are both finite and that Kobayashi's plus and minus main conjectures  hold (see \cite[Page 2]{kobayashi03}). Then, it follows from \eqref{eq:SES-pm} that
\[
\Char_\Lambda\cM^\pm(E/\Qcyc)^\vee=\Char_\Lambda\Sel_{p^\infty}^\pm(E/\Qcyc)^\vee=(L_p^\pm).
\]
In particular, Theorem~\ref{thmD} tells us that
\[
\gcd(L_p^+,L_p^-)=\left(X^{e_0}\prod_{n>0,e_n>0}\Phi_n^{e_n-1}\right)
\]
as predicted by Kurihara--Pollack's problem.
\end{remark}

\bibliographystyle{amsalpha}
\bibliography{references}

\end{document}